\documentclass{amsart}
\usepackage{amsfonts}
\usepackage{amsmath}
\usepackage{amssymb}

\newtheorem{theorem}{Theorem}

\newtheorem{corollary}[theorem]{Corollary}

\newtheorem{lemma}[theorem]{Lemma}

\newtheorem{proposition}[theorem]{Proposition}
\newtheorem{remark}[theorem]{Remark}

\begin{document}

\author{Maciej \textsc{Ciesielski} and Ryszard \textsc{P\l uciennik}}
\title[On some modifications of $n$-th von Neumann-Jordan
constant]{On some modifications of $n$-th von Neumann-Jordan
constant for Banach spaces}

\maketitle

\begin{abstract}
There are discussed some modifications of $n$-th von Neumann-Jordan constant
considered by M. Kato, Y. Takahashi and K. Hashimoto in \cite{KT3}. We
study, among others, upper, lower, upper modified and lower modified $n$-th
von Neumann-Jordan constant and relationships between them. There are
characterized uniformly non-$l_{n}^{1}$ Banach spaces in terms of the upper
modified $n$-th von Neumann-Jordan constant. Moreover, this constant is
calculated explicitly for Lebesgue spaces $L^{p}$ and $l^{p}$ $(1\leq p\leq
\infty ).$
\end{abstract}

\bigskip\ 

{\small \underline{2010 Mathematics Subjects Classification:}\hspace{1.5cm}\
\ \ \quad\ \quad 46E30, 46E40, 46B20. }\smallskip\ 

{\small \underline{Key Words and Phrases:}\hspace{0.15in}von Neumann-Jordan
constant, modified }${\small n}${\small -th von Neumann-Jordan constant,
uniformly non-}$l_{n}^{(1)}${\small -Banach space, }$B${\small -convex
Banach space. }

\bigskip\ \ 

\begin{center}
\textbf{1. Preliminaries}\ 
\end{center}

Let $S(X)$ (resp. $B(X))$ be the unit sphere (resp. the unit closed ball) of
a real Banach space $(X,\left\Vert \cdot \right\Vert _{X}).$ The letters $%
\mathbb{Z},$ $\mathbb{N}$ and $\mathbb{R}$ stand for the sets of integers,
positive integers and real numbers, respectively. For any subset $A\subset
X, $ denote $A^{n}=\underset{n\text{-times}}{\underbrace{A\times ...\times A}%
}.$

In 1937 Clakson \cite{C1}, on the basis of the famous paper \cite{JN} by
Jordan and von Neumann, introduced the constant $C_{NJ}(X)$ (called the 
\textit{von Neumann-Jordan constant or NJ-constant for short}) as the
smallest constant $C\geq 1$ such that 
\begin{equation*}
\frac{1}{C}\leq \frac{\left\Vert x+y\right\Vert _{X}^{2}+\left\Vert
x-y\right\Vert _{X}^{2}}{2\left( \left\Vert x\right\Vert _{X}^{2}+\left\Vert
y\right\Vert _{X}^{2}\right) }\leq C
\end{equation*}%
holds for any $x,y\in X$ with $\left\Vert x\right\Vert _{X}^{2}+\left\Vert
y\right\Vert _{X}^{2}>0.$ An equivalent and more convenient definition of
NJ-constant is given in \cite{KMT} by the formula 
\begin{equation*}
C_{NJ}(X)=\sup \left\{ \frac{\left\Vert x+y\right\Vert _{X}^{2}+\left\Vert
x-y\right\Vert _{X}^{2}}{2\left( \left\Vert x\right\Vert _{X}^{2}+\left\Vert
y\right\Vert _{X}^{2}\right) }:x\in S(X),\text{ }y\in B(X)\right\} .
\end{equation*}

The classical Jordan and von Neumann results \cite{JN} state that $1\leq
C_{NJ}(X)\leq 2$ for any Banach space $X$ and $C_{NJ}(X)=1$ if and only if $%
X $ is a Hilbert space. Clarkson \cite{C1} showed that if $1\leq p\leq
\infty $ and $\dim L^{p}(\mu )\geq 2,$ then $C_{NJ}(L^{p}(\mu ))=2^{2/\min
\{p,q\}-1}, $ where $1/p+1/q=1.$ Kato and Takahashi \cite{KT2}, observed
that $C_{NJ}(X)=C_{NJ}(X^{\ast }).$ Moreover they proved that if the Banach
space $X$ is uniformly convex, then $C_{NJ}(X)<2$ and if $C_{NJ}(X)<2,$ then 
$X$ admits an equivalent uniformly convex norm. This same authors \cite{KT2}
state that the Banach space $X$ is uniformly non-square if and only if $%
C_{NJ}(X)<2.$

A similar constant 
\begin{equation*}
C_{NJ}^{^{\prime }}(X)=\sup \left\{ \frac{\left\Vert x+y\right\Vert
_{X}^{2}+\left\Vert x-y\right\Vert _{X}^{2}}{4}:x,y\in S(X)\right\}
\end{equation*}%
was introduced in 2006 by Gao \cite{Gao} and called the modified von
Neumann-Jordan constant. It is clear that $C_{NJ}^{^{\prime }}(X)\leq
C_{NJ}(X).$ It has been shown that $C_{NJ}^{^{\prime }}(X)$ does not
necessarily coincide with $C_{NJ}(X)$ (see \cite{AMP, GS}).

To generalize the von Neumann-Jordan constant, denote 
\begin{equation*}
C^{(n)}(x_{1},x_{2},...,x_{n})=\frac{\sum_{\theta _{j}=\pm 1}\left\Vert
x_{1}+\sum_{j=2}^{n}\theta _{j}x_{j}\right\Vert _{X}^{2}}{%
2^{n-1}\sum_{j=1}^{n}\left\Vert x_{j}\right\Vert _{X}^{2}}
\end{equation*}%
for any $x_{1},x_{2},...,x_{n}\in X$ such that $\sum_{j=1}^{n}\left\Vert
x_{j}\right\Vert _{X}^{2}>0.$ \smallskip\ 

\textbf{Definition 1.} The smallest (the largest) constant $C>0$ such that 
\begin{equation}
C^{(n)}(x_{1},x_{2},...,x_{n})\leq C\text{ }\left( C\leq
C^{(n)}(x_{1},x_{2},...,x_{n})\right)  \label{Cn}
\end{equation}%
for all $x_{j}\in X,$ $(j=1,2,...,n$ and $n\geq 2)$ with $%
\sum_{j=1}^{n}\left\Vert x_{j}\right\Vert _{X}^{2}>0$ is called an \textit{%
upper} (\textit{lower}) $n$-\textit{th von Neumann-Jordan constant} and it
is denoted by $\overline{C}_{NJ}^{(n)}(X)$ $\left( \underline{C}%
_{NJ}^{(n)}(X)\right) ,$ respectively. If the infimum (supremum) of $C$
satisfying (\ref{Cn}) is taken over all $x_{j}\in S(X),$ $(j=1,2,...,n$ and $%
n\geq 2),$ then it is called \textit{upper} (\textit{lower}) \textit{modified%
} $n$-\textit{th von Neumann-Jordan constant} and it is denoted by $%
\overline{C}_{mNJ}^{(n)}(X)$ $\left( \underline{C}_{mNJ}^{(n)}(X)\right) ,$
respectively.\ \smallskip\ 

It is well known that $\overline{C}_{NJ}^{(2)}(X)=\left[ \underline{C}%
_{NJ}^{(2)}(X)\right] ^{-1}=C_{NJ}(X)$ (see \cite{TK}). As it is proved
below, the equality $\overline{C}_{NJ}^{(n)}(X)=\left[ \underline{C}%
_{NJ}^{(n)}(X)\right] ^{-1}$ is not true in general for $n>2.$ Moreover, $%
\overline{C}_{mNJ}^{(2)}(X)=C_{NJ}^{\prime }(X)$ and $\overline{C}%
_{mNJ}^{(n)}(X)$ need not be equal to $\left[ \underline{C}_{mNJ}^{(n)}(X)%
\right] ^{-1}$ even for $n=2$ (see \cite{Gao}). The $n$-th von
Neumann-Jordan constant introduced and investigated by Kato, Takahashi and
Hashimoto in \cite{KT3} is exactly the upper $n$-th von Neumann-Jordan
constant. \smallskip\ 

In 1964 James \cite{Ja} introduced the notion of uniformly non-$l_{n}^{1}$
Banach space. Namely, a Banach space $X$ is called \textit{uniformly non}-$%
l_{n}^{1}$ if there exists $\delta >0$ such that for each $n$ elements of
the unit ball $B(X)$ 
\begin{equation*}
\min_{\theta _{j}=\pm 1}\left\Vert x_{1}+\sum\nolimits_{j=2}^{n}\theta
_{j}x_{j}\right\Vert _{X}\leq n\left( 1-\delta \right)
\end{equation*}%
(see \cite{GJ}). The definition remains the same if we replace the unit ball 
$B(X)$ by the unit sphere $S(X)$. If $X$ is uniformly non-$l_{n}^{1}$ for $%
n=2,$ then it is called \textit{uniformly non-square. }In 1987 Kami\'{n}ska
and Turett \cite{KaT} proved that the uniform non-$l_{n}^{1}$ for Banach
spaces is equivalent to the fact that there exists $\delta >0$ such that for
all $x_{1},x_{2},...,x_{n}$ in $X$ 
\begin{equation*}
\min_{\theta _{j}=\pm 1}\left\Vert x_{1}+\sum\nolimits_{j=2}^{n}\theta
_{j}x_{j}\right\Vert _{X}\leq \left( 1-\frac{\delta n\min_{1\leq i\leq
n}\left\Vert x_{i}\right\Vert _{X}}{\sum_{j=1}^{n}\left\Vert
x_{j}\right\Vert _{X}}\right) \sum_{j=1}^{n}\left\Vert x_{j}\right\Vert _{X}.
\end{equation*}

Banach spaces that are uniformly non-$l_{n}^{1}$ for a certain $n\in \mathbb{%
N}$ have been studied by A. Beck \cite{B}. Such spaces are said to be $B$-%
\textit{convex}. Beck \cite{B} proved that a Banach space $X$ is $B$-convex
if and only if a certain strong law of large numbers is valid for random
variables with ranges in $X.$ Moreover, $B$-convexity is a very important
property in fixed point theory because every $B$-convex uniformly monotone K%
\"{o}the space has the fixed point property \cite{AK}. \bigskip\ 

\begin{center}
\textbf{2. Basic properties}\ 
\end{center}

\begin{proposition}
\label{propCn}Let $n\geq 2$ and $X$ be a Banach space. The lower, upper,
modified lower and modified upper $n$-th von Neumann-Jordan constants have
the following properties:\ \newline
\hspace*{0.3in}$(a)$ $1\leq \overline{C}_{mNJ}^{(n)}(X)\leq \overline{C}%
_{NJ}^{(n)}(X)\leq n$ and $1/n\leq \underline{C}_{NJ}^{(n)}(X)\leq 
\underline{C}_{mNJ}^{(n)}(X)\leq 1;$ \newline
\hspace*{0.3in}$(b)$ $\overline{C}_{NJ}^{(n)}(X)\leq \overline{C}%
_{NJ}^{(n+1)}(X)$ and $\underline{C}_{NJ}^{(n+1)}(X)\leq \underline{C}%
_{NJ}^{(n)}(X);$ \newline
\hspace*{0.3in}$(c)$ $\overline{C}_{mNJ}^{(n)}(X)\leq \frac{n+1}{n}\overline{%
C}_{mNJ}^{(n+1)}(X)$ and $\underline{C}_{mNJ}^{(n+1)}(X)\leq \frac{n+1}{n}%
\underline{C}_{mNJ}^{(n)}(X)+\frac{1}{n+1}.$ \ 
\end{proposition}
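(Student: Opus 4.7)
The plan is to handle parts (a), (b), (c)(i) by short direct arguments, reserving most effort for (c)(ii).

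For part (a), the chain $1\le\overline{C}_{mNJ}^{(n)}\le\overline{C}_{NJ}^{(n)}\le n$ and its mirror $1/n\le\underline{C}_{NJ}^{(n)}\le\underline{C}_{mNJ}^{(n)}\le 1$ split into independent pieces. The middle comparisons are trivial since $C^{(n)}$ is jointly $0$-homogeneous, so restricting to $S(X)^n$ contracts the supremum and expands the infimum. The bound $\overline{C}_{NJ}^{(n)}\le n$ follows from the triangle inequality applied termwise together with Cauchy--Schwarz $(\sum\|x_j\|)^2\le n\sum\|x_j\|^2$. The endpoints $\overline{C}_{mNJ}^{(n)}\ge 1$ and $\underline{C}_{mNJ}^{(n)}\le 1$ come from the computation $C^{(n)}(x,\ldots,x)=1$ for any $x\in S(X)$, via $\sum_\theta(1+\sum_{j\ge 2}\theta_j)^2=n\cdot 2^{n-1}$. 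Finally, for $\underline{C}_{NJ}^{(n)}\ge 1/n$ I will use the recovery identities $2^{n-1}x_1=\sum_\theta(x_1+\sum_{j\ge 2}\theta_jx_j)$ and $2^{n-1}x_k=\sum_\theta\theta_k(x_1+\sum_{j\ge 2}\theta_jx_j)$ for $k\ge 2$, then apply the triangle inequality, square via Cauchy--Schwarz, and sum over $k$ to obtain $\sum_k\|x_k\|^2\le\frac{n}{2^{n-1}}\sum_\theta\|x_1+\sum_{j\ge 2}\theta_jx_j\|^2$.

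Part (b) is the identity $C^{(n+1)}(x_1,\ldots,x_n,0)=C^{(n)}(x_1,\ldots,x_n)$: inserting zero just doubles the asymmetric numerator over $\theta_{n+1}=\pm 1$ and contributes nothing to the denominator. Taking supremum (respectively infimum) over $(x_1,\ldots,x_n)$ delivers both monotonicities. For (c)(i), fix $x_1,\ldots,x_{n+1}\in S(X)$ and split the numerator of $C^{(n+1)}$ over $\theta_{n+1}=\pm 1$; the parallelogram-type inequality $\|v+x_{n+1}\|^2+\|v-x_{n+1}\|^2\ge 2\|v\|^2$ with $v=x_1+\sum_{j=2}^n\theta_jx_j$ yields $N'\ge 2N$ pointwise, so after comparing denominators $2^n(n+1)$ and $2^{n-1}n$ one gets $C^{(n+1)}\ge\frac{n}{n+1}C^{(n)}$; taking the supremum over $S(X)^n$ then closes the argument.

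Part (c)(ii) is the real obstacle. My plan is to start with $x_1,\ldots,x_n\in S(X)$ approaching the infimum for $\underline{C}_{mNJ}^{(n)}$ and adjoin some $y\in S(X)$ chosen as a function of the $x_j$. Splitting the numerator of $C^{(n+1)}(x_1,\ldots,x_n,y)$ over $\theta_{n+1}=\pm 1$ and bounding $\|v\pm y\|^2\le(\|v\|+1)^2$ by the triangle inequality, then applying a weighted AM--GM $2\|v\|\le a\|v\|^2+1/a$ with $a=(2n+1)/n^2$, makes the coefficient of $C^{(n)}$ in the resulting estimate equal exactly $(n+1)/n$. The delicate point is tightening the additive constant: a uniform choice of $y$ only yields $(n+1)/(2n+1)$, so to reach the claimed $1/(n+1)$ I anticipate needing either to set $y=x_1$, which via $\theta_1+\theta_{n+1}\in\{-2,0,2\}$ reduces the numerator to an explicit combination $K+M$ of $\sum_\theta\|2x_1+A(\theta)\|^2$ and $\sum_\theta\|A(\theta)\|^2$ with $A(\theta)=\sum_{j=2}^n\theta_jx_j$, or to invoke the convexity estimate $\|\sum_{j=1}^{n+1}\theta_jz_j\|^2\le\frac{n+1}{n^2}\sum_{k=1}^{n+1}\|\sum_{j\ne k}\theta_jz_j\|^2$ arising from $\sum_j\theta_jz_j=\frac{1}{n}\sum_k\sum_{j\ne k}\theta_jz_j$ and convexity of $\|\cdot\|^2$. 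Passing to the infimum over $(x_1,\ldots,x_n)$ closes the proof.
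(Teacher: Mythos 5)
Parts (a), (b) and (c)(i) of your proposal are correct, and in (a) you take a genuinely different route to $\underline{C}_{NJ}^{(n)}(X)\ge 1/n$: the paper argues by induction on $n$, extracting one coordinate at a time via the two-point inequality $\|x+y\|_X^2+\|x-y\|_X^2\ge \|x\|_X^2+\|y\|_X^2$ and then averaging over which coordinate is extracted, whereas your inversion identities $2^{n-1}x_k=\sum_{\theta}\theta_k\bigl(x_1+\sum_{j\ge 2}\theta_jx_j\bigr)$ followed by the triangle inequality and Cauchy--Schwarz give the bound in one stroke, with no induction; that is arguably cleaner. Your (b) (padding with $0$) and (c)(i) (the lower bound $\|v+y\|^2+\|v-y\|^2\ge\frac12(\|v+y\|+\|v-y\|)^2\ge 2\|v\|^2$) coincide with the paper's arguments. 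One cosmetic remark: the middle inequalities in (a) are simply ``sup over a subset is smaller''; the appeal to $0$-homogeneity is unnecessary and does not by itself move a general tuple onto $S(X)^n$.

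Part (c)(ii) is not proved. Your computation honestly stops at $C^{(n+1)}\le\frac{n+1}{n}C^{(n)}+\frac{n+1}{2n+1}$, and $\frac{n+1}{2n+1}>\frac12>\frac1{n+1}$ for $n\ge2$, so the claimed additive constant is not reached; the two repairs you gesture at are not carried out, and neither closes the gap as described. Setting $x_{n+1}=x_1$ reduces the numerator to $2\sum_{\theta'}\|A(\theta')\|^2+2\sum_{\theta'}\|2x_1+A(\theta')\|^2$, and bounding both summands by $\bigl(\|x_1+A(\theta')\|+1\bigr)^2$ leaves a cross term of order $\sqrt{nC^{(n)}}/(n+1)$ that cannot be absorbed into $\bigl(\frac{n+1}{n}-\frac{n}{n+1}\bigr)C^{(n)}$ when $C^{(n)}$ is small, which is exactly the regime relevant at the infimum (where $C^{(n)}$ can be as small as $1/n$). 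The convexity route yields $C^{(n+1)}(z_1,\dots,z_{n+1})\le\frac1n\sum_{k=1}^{n+1}C^{(n)}\bigl(\{z_j\}_{j\ne k}\bigr)$, which involves all $n+1$ leave-one-out constants; at a minimizing tuple only one of them is controlled by $\underline{C}_{mNJ}^{(n)}$, so this is far too weak. For comparison, the paper disposes of this step in two lines by invoking ``the Clarkson inequality for $p=2$,'' i.e.\ the termwise bound $\|v+y\|^2+\|v-y\|^2\le 2\|v\|^2+2\|y\|^2$. You were right not to assume this: it fails in a general Banach space (in $\ell_2^1$ take $v=(1,0)$, $y=(0,1)$; the left side is $8$ and the right side is $4$), and indeed its validity with constant $1$ characterizes inner-product-like behaviour. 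So your difficulty is real, not a failure to see an obvious trick: with only the triangle inequality at your disposal, the constant $\frac1{n+1}$ does not follow from either of your sketched strategies, and (c)(ii) remains a genuine gap in the proposal.
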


\begin{proof}
Let $(X,\left\Vert \cdot \right\Vert _{X})$ be a Banach
space and $n\geq 2.$

$a)$ The estimation $\overline{C}_{NJ}^{(n)}(X)\leq n$ is proved in \cite%
{KT3}. $\overline{C}_{mNJ}^{(n)}(X)\leq \overline{C}_{NJ}^{(n)}(X)$ by the
definition. Putting $x_{1}\in S(X)$ and $x_{i}=x_{1}$ for $i=2,3,...,n,$ we
have 
\begin{eqnarray*}
C^{(n)}(x_{1},x_{1},...,x_{1}) &=&\frac{1}{n2^{n-1}}\sum_{\theta _{j}=\pm
1}\left\Vert x_{1}+\sum\nolimits_{j=2}^{n}\theta _{j}x_{1}\right\Vert
_{X}^{2} \\
&=&\frac{1}{n2^{n-1}}\sum\nolimits_{j=0}^{n-1}\binom{n-1}{j}\left\Vert
(n-2j)x_{1}\right\Vert _{X}^{2} \\
&=&\frac{1}{n2^{n-1}}\sum\nolimits_{j=0}^{n-1}\binom{n-1}{j}(n-2j)^{2}=1.
\end{eqnarray*}%
Hence 
\begin{equation*}
\overline{C}_{mNJ}^{(n)}(X)=\sup \left\{
C^{(n)}(x_{1},x_{2},...,x_{n}):x_{i}\in S(X),\text{ }i=1,2,...,n\right\}
\geq 1
\end{equation*}%
and 
\begin{equation*}
\underline{C}_{mNJ}^{(n)}(X)=\inf \left\{
C^{(n)}(x_{1},x_{2},...,x_{n}):x_{i}\in S(X),\text{ }i=1,2,...,n\right\}
\leq 1.
\end{equation*}%
Obviously, $\underline{C}_{NJ}^{(n)}(X)\leq \underline{C}_{mNJ}^{(n)}(X)$ by
the definition. To prove that $\frac{1}{n}\leq \underline{C}_{NJ}^{(n)}(X),$
we use the mathematical induction principle. For $n=2$ we have 
\begin{equation*}
\underline{C}_{NJ}^{(2)}(X)=\frac{1}{\overline{C}_{NJ}^{(2)}(X)}\geq \frac{1%
}{2}.
\end{equation*}%
Suppose that \ $\underline{C}_{NJ}^{(n-1)}(X)\geq \frac{1}{n-1}.$ Notice
that 
\begin{equation}
\left\Vert x+y\right\Vert _{X}^{2}+\left\Vert x-y\right\Vert _{X}^{2}\geq
2\left( \max \left\{ \left\Vert x\right\Vert _{X},\left\Vert y\right\Vert
_{X}\right\} \right) ^{2}\geq \left\Vert x\right\Vert _{X}^{2}+\left\Vert
y\right\Vert _{X}^{2}  \label{Inorm}
\end{equation}%
for any $x,y\in X.$ Really, since 
$$\left\Vert x+y\right\Vert _{X}+\left\Vert
x-y\right\Vert _{X}\geq 2\max \left\{ \left\Vert x\right\Vert
_{X},\left\Vert y\right\Vert _{X}\right\} =2m_{x,y}\geq\left\Vert x+y\right\Vert _{X}-\left\Vert x-y\right\Vert _{X}
$$ 
we have 
\begin{eqnarray*}
\left\Vert x+y\right\Vert _{X}^{2}+\left\Vert x-y\right\Vert _{X}^{2} &\geq
&\left\Vert x+y\right\Vert _{X}^{2}+\left( 2m_{x,y}-\left\Vert
x+y\right\Vert _{X}\right) ^{2} \\
&=&2\left\Vert x+y\right\Vert _{X}^{2}-4m_{x,y}\left\Vert x+y\right\Vert
_{X}+4\left( m_{x,y}\right) ^{2} \\
&=&2\left[ \left( \left\Vert x+y\right\Vert _{X}-m_{x,y}\right) ^{2}+\left(
m_{x,y}\right) ^{2}\right] \\
&\geq &2\left( \max \left\{ \left\Vert x\right\Vert _{X},\left\Vert
y\right\Vert _{X}\right\} \right) ^{2}\geq \left\Vert x\right\Vert
_{X}^{2}+\left\Vert y\right\Vert _{X}^{2}.
\end{eqnarray*}%
Hence 
\begin{eqnarray*}
C^{(n)}(x_{1},x_{2},...,x_{n}) &=&\frac{\sum_{\theta _{j}=\pm 1}\left\Vert
x_{1}+\sum_{j=2}^{n}\theta _{j}x_{j}\right\Vert _{X}^{2}}{%
2^{n-1}\sum_{j=1}^{n}\left\Vert x_{j}\right\Vert _{X}^{2}} \\
&=&\frac{\sum_{\theta _{j}=\pm 1}\left\Vert \sum_{j=1}^{n}\theta
_{j}x_{j}\right\Vert _{X}^{2}}{2^{n}\sum_{j=1}^{n}\left\Vert
x_{j}\right\Vert _{X}^{2}} \\
&=&\frac{\sum_{\theta _{j}=\pm 1}\left( \left\Vert \left( \sum_{j\neq
i}\theta _{j}x_{j}\right) +x_{i}\right\Vert _{X}^{2}+\left\Vert \left(
\sum_{j\neq i}\theta _{j}x_{j}\right) -x_{i}\right\Vert _{X}^{2}\right) }{%
2^{n}\sum_{j=1}^{n}\left\Vert x_{j}\right\Vert _{X}^{2}} \\
&\geq &\frac{\sum_{\theta _{j}=\pm 1}\left\Vert \sum_{j\neq i}\theta
_{j}x_{j}\right\Vert _{X}^{2}+2^{n-1}\left\Vert x_{i}\right\Vert _{X}^{2}}{%
2^{n}\sum_{j=1}^{n}\left\Vert x_{j}\right\Vert _{X}^{2}} \\
&\geq &\frac{\frac{2^{n-1}}{n-1}\sum_{j\neq i}\left\Vert x_{j}\right\Vert
_{X}^{2}+2^{n-1}\left\Vert x_{i}\right\Vert _{X}^{2}}{2^{n}\sum_{j=1}^{n}%
\left\Vert x_{j}\right\Vert _{X}^{2}} \\
&=&\frac{\frac{1}{n-1}\sum_{j\neq i}\left\Vert x_{j}\right\Vert
_{X}^{2}+\left\Vert x_{i}\right\Vert _{X}^{2}}{2\sum_{j=1}^{n}\left\Vert
x_{j}\right\Vert _{X}^{2}} \\
&=&\frac{\frac{1}{n-1}\sum_{j=1}^{n}\left\Vert x_{j}\right\Vert _{X}^{2}+%
\frac{n-2}{n-1}\left\Vert x_{i}\right\Vert _{X}^{2}}{2\sum_{j=1}^{n}\left%
\Vert x_{j}\right\Vert _{X}^{2}} \\
&=&\frac{1}{2\left( n-1\right) }+\frac{(n-2)\left\Vert x_{i}\right\Vert
_{X}^{2}}{2\left( n-1\right) \sum_{j=1}^{n}\left\Vert x_{j}\right\Vert
_{X}^{2}}
\end{eqnarray*}%
for any $i=1,2,...,n.$ It follows that 
\begin{eqnarray*}
C^{(n)}(x_{1},x_{2},...,x_{n}) &\geq &\frac{1}{n}\sum\limits_{i=1}^{n}\left( 
\frac{1}{2\left( n-1\right) }+\frac{(n-2)\left\Vert x_{i}\right\Vert _{X}^{2}%
}{2\left( n-1\right) \sum_{j=1}^{n}\left\Vert x_{j}\right\Vert _{X}^{2}}%
\right) \\
&=&\frac{1}{2\left( n-1\right) }+\frac{n-2}{2n\left( n-1\right) }=\frac{1}{n}
\end{eqnarray*}%
and consequently $\underline{C}_{NJ}^{(n)}(X)\geq \frac{1}{n},$ which
finishes the proof of $(a).$ \medskip\ 

$(b)$ The inequality $\overline{C}_{NJ}^{(n)}(X)\leq \overline{C}%
_{NJ}^{(n+1)}(X)$ is proved in \cite{KT3}. To prove the second inequality it
is enough to notice that 
\begin{equation*}
C^{(n)}(x_{1},x_{2},...,x_{n})=C^{(n+1)}(x_{1},x_{2},...,x_{n},0)
\end{equation*}%
for any elements $x_{1},x_{2},...,x_{n}\in X.$ Hence 
\begin{eqnarray*}
\underline{C}_{NJ}^{(n+1)}(X) &=&\inf \left\{
C^{(n+1)}(x_{1},x_{2},...,x_{n},x_{n+1}):x_{1},x_{2},...,x_{n+1}\in X\right\}
\\
&\leq &\inf \left\{ C^{(n)}(x_{1},x_{2},...,x_{n}):x_{1},x_{2},...,x_{n}\in
X\right\} =\underline{C}_{NJ}^{(n)}(X).
\end{eqnarray*}

$(c)$ For any $x_{1},x_{2},...,x_{n+1}\in S(X),$ by the inequality (\ref%
{Inorm}), we have 
\begin{eqnarray*}
C^{(n+1)}(x_{1},x_{2},...,x_{n+1}) &=&\frac{\sum\limits_{\theta _{j}=\pm
1}\left( \left\Vert \left( \sum\limits_{j=1}^{n}\theta _{j}x_{j}\right)
+x_{n+1}\right\Vert _{X}^{2}+\left\Vert \left( \sum\limits_{j=1}^{n}\theta
_{j}x_{j}\right) -x_{n+1}\right\Vert _{X}^{2}\right) }{(n+1)2^{n+1}} \\
&\geq &\frac{1}{(n+1)2^{n}}\sum_{\theta _{j}=\pm 1}\max \left\{ \left\Vert
\sum\nolimits_{j=1}^{n}\theta _{j}x_{j}\right\Vert _{X}^{2},\left\Vert
x_{n+1}\right\Vert _{X}^{2}\right\} \\
&\geq &\frac{1}{(n+1)2^{n}}\sum_{\theta _{j}=\pm 1}\left\Vert
\sum\nolimits_{j=1}^{n}\theta _{j}x_{j}\right\Vert _{X}^{2}=\frac{n}{n+1}%
C^{(n)}(x_{1},x_{2},...,x_{n}),
\end{eqnarray*}%
whence $\overline{C}_{mNJ}^{(n)}(X)\leq \frac{n+1}{n}\overline{C}%
_{mNJ}^{(n+1)}(X).$

By the Clarkson inequality for $p=2,$ we have 
\begin{eqnarray*}
C^{(n+1)}(x_{1},x_{2},...,x_{n+1}) &=&\frac{\sum\limits_{\theta _{j}=\pm
1}\left( \left\Vert \left( \sum\limits_{j=1}^{n}\theta _{j}x_{j}\right)
+x_{n+1}\right\Vert _{X}^{2}+\left\Vert \left( \sum\limits_{j=1}^{n}\theta
_{j}x_{j}\right) -x_{n+1}\right\Vert _{X}^{2}\right) }{(n+1)2^{n+1}} \\
&\leq &\frac{2\sum_{\theta _{j}=\pm 1}\left\Vert
\sum\nolimits_{j=1}^{n}\theta _{j}x_{j}\right\Vert _{X}^{2}}{(n+1)2^{n+1}}+%
\frac{2^{n+1}\left\Vert x_{n+1}\right\Vert _{X}^{2}}{(n+1)2^{n+1}} \\
&=&\frac{n+1}{n}C^{(n)}(x_{1},x_{2},...,x_{n})+\frac{1}{n+1}
\end{eqnarray*}%
for any $x_{1},x_{2},...,x_{n+1}\in S(X)$ and consequently $\underline{C}%
_{mNJ}^{(n+1)}(X)\leq \frac{n+1}{n}\underline{C}_{mNJ}^{(n)}(X)+\frac{1}{n+1}%
.$
\end{proof}

\begin{proposition}
\label{Hilbert}Let $n\geq 2$ and $X$ be a Banach space. \newline
\hspace*{0.3in}$(a)$ The following conditions are equivalent: \newline
\hspace*{0.3in}\hspace*{0.3in}$(i)$ $X$ is a Hilbert space. \newline
\hspace*{0.3in}\hspace*{0.3in}$(ii)$ $\overline{C}_{NJ}^{(n)}(X)=1.$ \newline
\hspace*{0.3in}\hspace*{0.3in}$(iii)$ $\underline{C}_{NJ}^{(n)}(X)=1.$ 
\newline
\hspace*{0.3in}$(b)$ If $X$ is a Hilbert space, then $\overline{C}%
_{mNJ}^{(n)}(X)=\underline{C}_{mNJ}^{(n)}(X)=1.$ \medskip\ 
\end{proposition}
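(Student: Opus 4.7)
The plan is to handle the direct implications (Hilbert $\Rightarrow$ everything else) by exhibiting the generalized parallelogram identity, and then use the monotonicity established in Proposition \ref{propCn}$(b)$ to deflate the converses in part $(a)$ to the classical $n=2$ theorem of Jordan and von Neumann.

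First I would prove that in a Hilbert space $H$, for any $x_{1},\ldots,x_{n}\in H$ the identity
\begin{equation*}
\sum_{\theta_{j}=\pm 1}\Bigl\|x_{1}+\sum_{j=2}^{n}\theta_{j}x_{j}\Bigr\|^{2}=2^{n-1}\sum_{j=1}^{n}\|x_{j}\|^{2}
\end{equation*}
holds. Expanding the squared norm via the inner product, each term contributes $\sum_{j}\|x_{j}\|^{2}$ plus cross terms $2\theta_{i}\theta_{j}\langle x_{i},x_{j}\rangle$ (with $\theta_{1}\equiv 1$). Summing over the $2^{n-1}$ sign choices, every cross term involving at least one $\theta_{k}$ with $k\geq 2$ cancels because $\sum_{\theta_{k}=\pm 1}\theta_{k}=0$, leaving only the diagonal contribution. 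Equivalently, this can be done by induction on $n$ using the ordinary parallelogram law on the pair $\bigl(\sum_{j=1}^{n}\theta_{j}x_{j},\,x_{n+1}\bigr)$. Either way, we conclude $C^{(n)}(x_{1},\ldots,x_{n})=1$ for every admissible tuple, which immediately yields $\overline{C}_{NJ}^{(n)}(X)=\underline{C}_{NJ}^{(n)}(X)=1$ in $(a)$ and the analogous equalities in $(b)$.

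For the converses in $(a)$, I would apply the monotonicity of $n\mapsto\overline{C}_{NJ}^{(n)}(X)$ and $n\mapsto\underline{C}_{NJ}^{(n)}(X)$ from Proposition \ref{propCn}$(b)$ together with the identification $\overline{C}_{NJ}^{(2)}(X)=C_{NJ}(X)$ and $\underline{C}_{NJ}^{(2)}(X)=1/C_{NJ}(X)$ noted in the Preliminaries. If $\overline{C}_{NJ}^{(n)}(X)=1$, then $C_{NJ}(X)=\overline{C}_{NJ}^{(2)}(X)\leq\overline{C}_{NJ}^{(n)}(X)=1$, and since $C_{NJ}(X)\geq 1$ always, we obtain $C_{NJ}(X)=1$; the classical Jordan--von Neumann theorem now forces $X$ to be a Hilbert space. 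Dually, if $\underline{C}_{NJ}^{(n)}(X)=1$, then $1/C_{NJ}(X)=\underline{C}_{NJ}^{(2)}(X)\geq\underline{C}_{NJ}^{(n)}(X)=1$ combined with $\underline{C}_{NJ}^{(2)}(X)\leq 1$ gives $C_{NJ}(X)=1$, so again $X$ is Hilbert.

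Essentially no step here is genuinely difficult: the only substantive computation is the generalized parallelogram identity, and even that is a textbook exercise. The only point that requires a moment of care is ensuring that the induction in Proposition \ref{propCn}$(b)$ is correctly oriented so that both the upper and lower constants telescope down to the $n=2$ case; once that is in place the result follows by a two-line argument reducing to the Jordan--von Neumann characterization of Hilbert spaces.
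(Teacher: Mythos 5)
Your proposal is correct and follows essentially the same route as the paper: the generalized parallelogram identity gives $C^{(n)}\equiv 1$ in a Hilbert space (handling all forward implications and part $(b)$), and the converses reduce via the monotonicity in Proposition \ref{propCn}$(b)$ to $C_{NJ}(X)=1$ and the Jordan--von Neumann theorem. The only difference is cosmetic: the paper outsources the equivalence $(i)\Leftrightarrow(ii)$ to Theorem 5 $(iii)$ of \cite{KT3}, whereas you prove $(ii)\Rightarrow(i)$ directly by the same monotonicity argument the paper uses for $(iii)\Rightarrow(i)$, which makes your version slightly more self-contained.
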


\begin{proof}

$(a)$ By Theorem $5$ $(iii)$ in \cite{KT3}, conditions $(i)$
and $(ii)$ are equivalent. To prove the implication $(i)\Rightarrow (iii)$
suppose that $X$ is a Hilbert space. By elementary calculations, we get that 
\begin{equation}
C^{(n)}(x_{1},x_{2},...,x_{n})=1  \label{CnH}
\end{equation}%
for any elements $x_{1},x_{2},...,x_{n}\in X,$ whence $\underline{C}%
_{NJ}^{(n)}(X)=1.$ Conversely, $\underline{C}_{NJ}^{(n)}(X)=1,$ then, by
Proposition \ref{propCn} $(a)$ and $(b),$ we have 
\begin{equation*}
1\geq \frac{1}{C_{NJ}(X)}=\underline{C}_{NJ}^{(2)}(X)\geq \underline{C}%
_{NJ}^{(n)}(X)=1.
\end{equation*}%
Hence $C_{NJ}(X)=1.$ Consequently, $X$ is a Hilbert space (see \cite{JN}).

$(b)$ follows immediately from (\ref{CnH}). 
\end{proof}

In general, the explicit calculation of various types of $n$-th von
Neumann-Jordan constant is rather hard problem. Anyway, the next proposition
can be helpful to do this.

\begin{proposition}
\label{D}\textbf{\ }Let $(X,\left\Vert \cdot \right\Vert _{X})$ be a Banach
space and $n\geq 2.$ Denote $D_{1}=\left[ B(X)\right] ^{n}\setminus \{%
\mathbf{0}\},$ $D_{2}=B(l_{n}^{2}(X))\setminus \{\mathbf{0}\},$ $%
D_{3}=S(l_{n}^{2}(X)),$ where $\mathbf{0}=(0,0,...,0).$ Then 
\begin{equation}
\overline{C}_{NJ}^{(n)}(X)=\sup \left\{
C^{(n)}(x_{1},x_{2},...,x_{n}):\left( x_{1},x_{2},...,x_{n}\right) \in
D_{j}\right\}  \label{sp}
\end{equation}%
and 
\begin{equation}
\underline{C}_{NJ}^{(n)}(X)=\inf \left\{
C^{(n)}(x_{1},x_{2},...,x_{n}):\left( x_{1},x_{2},...,x_{n}\right) \in
D_{j}\right\}  \label{if}
\end{equation}%
for any $j=1,2,3.$\ 
\end{proposition}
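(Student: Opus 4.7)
My plan is to rely on the positive homogeneity of $C^{(n)}$: for any nonzero $\lambda \in \mathbb{R}$,
\[
C^{(n)}(\lambda x_1, \ldots, \lambda x_n) = C^{(n)}(x_1, \ldots, x_n),
\]
because the common factor $\lambda^2$ cancels between the numerator and denominator. This identity is immediate from the definition of $C^{(n)}$.

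Next I would verify the inclusions $D_3 \subset D_2 \subset D_1 \subset X^n \setminus \{\mathbf{0}\}$; each is routine: a point on the unit sphere of $l_n^2(X)$ lies in the corresponding closed ball; a tuple $(x_1,\ldots,x_n) \in B(l_n^2(X))$ satisfies $\|x_i\|_X \leq 1$ for every $i$ and hence belongs to $[B(X)]^n$; and $D_1$ is by definition a subset of $X^n \setminus \{\mathbf{0}\}$. These inclusions yield the ``easy'' direction for each $j$:
\[
\sup_{D_3} C^{(n)} \leq \sup_{D_2} C^{(n)} \leq \sup_{D_1} C^{(n)} \leq \overline{C}_{NJ}^{(n)}(X),
\]
and the reverse chain for the infima.

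For the other direction I would use the scaling identity to normalize an arbitrary nonzero tuple into $D_3$. Given $(x_1,\ldots,x_n) \in X^n$ with $\sum_{j=1}^n \|x_j\|_X^2 > 0$, set $\lambda = \bigl(\sum_{j=1}^n \|x_j\|_X^2\bigr)^{-1/2}$, so that $(\lambda x_1,\ldots,\lambda x_n) \in D_3$; by homogeneity the value of $C^{(n)}$ is preserved. Hence
\[
\overline{C}_{NJ}^{(n)}(X) \leq \sup_{D_3} C^{(n)} \quad \text{and} \quad \underline{C}_{NJ}^{(n)}(X) \geq \inf_{D_3} C^{(n)},
\]
and combined with the chain above, all three sets $D_1, D_2, D_3$ deliver the same supremum and the same infimum, equal respectively to $\overline{C}_{NJ}^{(n)}(X)$ and $\underline{C}_{NJ}^{(n)}(X)$.

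The argument rests entirely on one line of algebra, namely the homogeneity identity, so there is no substantive obstacle. The only detail worth writing down carefully is the choice of scaling factor that lands in the prescribed set: the $\lambda$ above works simultaneously for $D_3 \subset D_2$, and for $D_1$ alone one could alternatively take $\lambda = 1/\max_{1 \leq i \leq n} \|x_i\|_X$, but since $D_3 \subset D_1$ already, no separate choice is needed.
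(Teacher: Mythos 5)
Your argument is correct and is essentially the paper's own proof: the same chain of inclusions $D_3\subset D_2\subset D_1\subset X^n\setminus\{\mathbf{0}\}$ gives one direction, and the same normalization $y_k=x_k/(\sum_{j}\Vert x_j\Vert_X^2)^{1/2}$ into $S(l_n^2(X))$, together with the homogeneity of $C^{(n)}$, gives the other. Nothing further is needed.
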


\begin{proof}
Since 
\begin{equation*}
S(l_{n}^{2}(X))\subset B(l_{n}^{2}(X))\setminus \{\mathbf{0}\}\subset \left[
B(X)\right] ^{n}\setminus \{\mathbf{0}\}\subset X^{n}\setminus \{\mathbf{0}%
\},
\end{equation*}%
it follows that 
\begin{equation*}
\sup_{\mathbf{x}\in D_{3}}C^{(n)}(\mathbf{x})\leq \sup_{\mathbf{x}\in
D_{2}}C^{(n)}(\mathbf{x})\leq \sup_{\mathbf{x}\in D_{1}}C^{(n)}(\mathbf{x}%
)\leq \overline{C}_{NJ}^{(n)}(X),
\end{equation*}%
where $\mathbf{x}=(x_{1},x_{2},...,x_{n}).$ To show (\ref{sp}), it remains
to prove that $\sup_{\mathbf{x}\in D_{3}}C^{(n)}(\mathbf{x})\geq \overline{C}%
_{NJ}^{(n)}(X).$ Let $\mathbf{x}=\left( x_{1},x_{2},...,x_{n}\right) \in
X^{n}\setminus \{\mathbf{0}\}.$ Define the sequence $\mathbf{y}%
=(y_{k})_{k=1}^{n}$ by 
\begin{equation*}
y_{k}=\frac{x_{k}}{\left( \sum_{j=1}^{n}\left\Vert x_{j}\right\Vert
_{X}^{2}\right) ^{\frac{1}{2}}}
\end{equation*}%
for $k=1,2,...,n$ and $n\geq 2.$ Obviously, $\mathbf{y}\in S(l_{n}^{2}(X)).$
Hence 
\begin{eqnarray*}
\sup_{\mathbf{x}\in D_{3}}C^{(n)}(\mathbf{x}) &\geq &C^{(n)}(\mathbf{y})=%
\frac{\sum_{\theta _{j}=\pm 1}\left\Vert y_{1}+\sum_{j=2}^{n}\theta
_{j}y_{j}\right\Vert _{X}^{2}}{2^{n-1}} \\
&=&\frac{\sum_{\theta _{j}=\pm 1}\left\Vert x_{1}+\sum_{j=2}^{n}\theta
_{j}x_{j}\right\Vert _{X}^{2}}{2^{n-1}\sum_{j=1}^{n}\left\Vert
x_{j}\right\Vert _{X}^{2}}=C^{(n)}(\mathbf{x})
\end{eqnarray*}%
for any elements $\mathbf{x}\in X^{n}\setminus \{\mathbf{0}\}.$ Therefore 
\begin{equation*}
\sup_{\mathbf{x}\in D_{3}}C^{(n)}(\mathbf{x})\geq \sup \left\{ C^{(n)}(%
\mathbf{x}):\mathbf{x}\in X^{n}\setminus \{\mathbf{0}\}\right\} =\overline{C}%
_{NJ}^{(n)}(X)
\end{equation*}%
which finishes the proof of (\ref{sp}). The equality (\ref{if}) can be
proved similarly. 
\end{proof}

Let $n\geq 2.$ Define 
\begin{equation*}
A_{2}=\left[ 
\begin{array}{cc}
1 & 1 \\ 
1 & -1%
\end{array}%
\right] _{2\times 2}
\end{equation*}%
and for each integers $n>2$%
\begin{equation*}
A_{n}=\left[ 
\begin{array}{cc}
A_{n-1} & \mathbf{1} \\ 
A_{n-1} & \mathbf{-1}%
\end{array}%
\right] _{2^{n-1}\times n},
\end{equation*}%
where $\mathbf{1}$ denotes the $2^{n-2}$-by-$1$ column vector in which all
the elements are equal to $1.$ The matrix $A_{n}$ generates a linear
operator 
\begin{equation*}
T_{n}:l_{n}^{2}(X)\rightarrow l_{2^{n-1}}^{2}(X)
\end{equation*}%
defined for any $x\in l_{n}^{2}(X)$ by the formula 
\begin{equation*}
T_{n}(x)=A_{n}x.
\end{equation*}

A one-to-one correspondence between $n$-th von Neumann-Jordan constant $%
\overline{C}_{NJ}^{(n)}(X)$ and the norm of the operator $T_{n}$ is given by
the following result.

\begin{corollary}
\label{T} Let $(X,\left\Vert \cdot \right\Vert _{X})$ be a Banach space and $%
T_{n}:l_{n}^{2}(X)\rightarrow l_{2^{n-1}}^{2}(X)$ be the linear operator
generated by the matrix $A_{n}.$\ Then 
\begin{equation*}
\overline{C}_{NJ}^{(n)}(X)=\frac{||T_{n}||^{2}}{2^{n-1}}
\end{equation*}%
for any integer $n\geq 2.$ \ 
\end{corollary}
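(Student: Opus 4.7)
The plan is to show that when $T_n$ acts on a vector $x=(x_1,\dots,x_n)\in l_n^2(X)$, the components of $T_n(x)\in l_{2^{n-1}}^2(X)$ are precisely the $2^{n-1}$ expressions $x_1+\sum_{j=2}^n\theta_j x_j$ obtained by letting $(\theta_2,\dots,\theta_n)$ range over $\{\pm 1\}^{n-1}$. Once this is established, the identification of $\overline{C}_{NJ}^{(n)}(X)$ with $\|T_n\|^2/2^{n-1}$ drops out of the definitions together with Proposition \ref{D}.

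First I would verify by induction on $n\ge 2$ that the rows of $A_n$ consist of exactly the $2^{n-1}$ vectors in $\{\pm 1\}^n$ whose first entry is $+1$. The base case $n=2$ is immediate from the explicit form of $A_2$. For the inductive step, note that the block structure $A_n=\bigl[\begin{smallmatrix}A_{n-1}&\mathbf{1}\\ A_{n-1}&-\mathbf{1}\end{smallmatrix}\bigr]$ simply appends $+1$ to each row of $A_{n-1}$ in the upper block and $-1$ in the lower block, so by induction the rows of $A_n$ enumerate all sign patterns $(1,\theta_2,\dots,\theta_n)$.

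With this indexing of rows, the definition of the $l_{2^{n-1}}^2(X)$-norm gives
\begin{equation*}
\|T_n(x)\|_{l_{2^{n-1}}^2(X)}^2=\sum_{\theta_j=\pm 1}\Bigl\|x_1+\sum_{j=2}^n\theta_j x_j\Bigr\|_X^2=2^{n-1}\,C^{(n)}(x_1,\dots,x_n)\sum_{j=1}^n\|x_j\|_X^2
\end{equation*}
whenever $x\neq\mathbf{0}$; equivalently, for every $x\in S(l_n^2(X))$ one has $\|T_n(x)\|^2=2^{n-1}C^{(n)}(x_1,\dots,x_n)$.

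Finally I would take suprema. By definition of the operator norm, $\|T_n\|^2=\sup\{\|T_n(x)\|^2:x\in S(l_n^2(X))\}$, and by Proposition \ref{D} applied to $D_3=S(l_n^2(X))$, $\sup\{C^{(n)}(x_1,\dots,x_n):x\in S(l_n^2(X))\}=\overline{C}_{NJ}^{(n)}(X)$. Combining these yields the claimed identity. The only point requiring any care is the indexing of rows of $A_n$; everything else is bookkeeping, so I do not expect a substantive obstacle.
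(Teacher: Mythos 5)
Your proposal is correct and follows essentially the same route as the paper: identify $\|T_n x\|_{l_{2^{n-1}}^2(X)}^2$ with the numerator of $C^{(n)}$ and then apply Proposition \ref{D} with $D_3=S(l_n^2(X))$ to pass to the supremum. The only difference is that you spell out the inductive verification that the rows of $A_n$ enumerate the sign patterns $(1,\theta_2,\dots,\theta_n)$, which the paper leaves implicit.
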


\begin{proof} 
Fix $n\geq 2.$ Let $x_{1},x_{2},...,x_{n}\in X$ and $%
\sum_{j=1}^{n}\left\Vert x_{j}\right\Vert _{X}^{2}>0.$ Denote $%
x=(x_{1},x_{2},...,x_{n}).$ By Proposition \ref{D} $\eqref{sp}$ we have 
\begin{eqnarray*}
\overline{C}_{NJ}^{(n)}(X) &=&\overline{F}^{(n)}=\sup \left\{ \frac{%
\sum_{\theta _{j}=\pm 1}\left\Vert x_{1}+\sum_{j=2}^{n}\theta
_{j}x_{j}\right\Vert _{X}^{2}}{2^{n-1}\sum_{j=1}^{n}\left\Vert
x_{j}\right\Vert _{X}^{2}}:x\in S(l_{n}^{2}(X))\right\} \\
&=&\frac{1}{2^{n-1}}\sup \left\{ ||T_{n}x||_{l_{2^{n-1}}^{2}(X)}^{2}:x\in
S(l_{n}^{2}(X))\right\} =\frac{||T_{n}||^{2}}{2^{n-1}}.
\end{eqnarray*}
\end{proof} 

\begin{corollary}
\label{Dual}Let $\left( X^{\ast },\left\Vert \cdot \right\Vert _{X^{\ast
}}\right) $ be the dual space of the Banach space $(X,\left\Vert \cdot
\right\Vert _{X}).$ Then \newline
\hspace*{0.3in}$(a)$ $\underline{C}_{NJ}^{(n)}(X^{\ast })\geq \frac{1}{%
\overline{C}_{NJ}^{(n)}(X)}.$ \ \newline
\hspace*{0.3in}$(b)$ $\underline{C}_{NJ}^{(n)}(X)\geq \frac{1}{\overline{C}%
_{NJ}^{(n)}(X^{\ast })}.$\ 
\end{corollary}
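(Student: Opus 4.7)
The plan is to use the operator-theoretic reformulation from Corollary \ref{T} together with a duality argument. The key algebraic ingredient is the matrix identity $A_n^T A_n = 2^{n-1} I_n$, which I would verify first by induction on $n$ using the block decomposition: the top-left block is $2 A_{n-1}^T A_{n-1} = 2^{n-1} I_{n-1}$ by induction hypothesis, the off-diagonal blocks vanish because $A_{n-1}^T \mathbf{1} + A_{n-1}^T(-\mathbf{1}) = 0$ and symmetrically, and the bottom-right entry is $\mathbf{1}^T\mathbf{1}+\mathbf{1}^T\mathbf{1}=2^{n-1}$.

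For part $(a)$, I would observe that the matrix $A_n$ induces a bounded operator on $l_n^2(Z) \to l_{2^{n-1}}^2(Z)$ for any Banach space $Z$, not just for $Z=X$. In particular, it gives an operator $A_n: l_n^2(X^\ast) \to l_{2^{n-1}}^2(X^\ast)$, and its transpose gives $A_n^T: l_{2^{n-1}}^2(X^\ast) \to l_n^2(X^\ast)$; viewing $A_n^T$ as the adjoint of the operator $T_n$ from Corollary \ref{T} (under the canonical identification $(l_k^2(X))^\ast = l_k^2(X^\ast)$), the standard operator duality gives $\|A_n^T\|_{l_{2^{n-1}}^2(X^\ast)\to l_n^2(X^\ast)} = \|T_n\|$. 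For any $f=(f_1,\ldots,f_n)\in l_n^2(X^\ast)$ with $\sum\|f_j\|^2>0$, the matrix identity yields $A_n^T(A_n f) = 2^{n-1}f$, so
\begin{equation*}
2^{n-1}\|f\|_{l_n^2(X^\ast)} \;=\; \|A_n^T A_n f\|_{l_n^2(X^\ast)} \;\leq\; \|A_n^T\|\cdot\|A_n f\|_{l_{2^{n-1}}^2(X^\ast)} \;=\; \|T_n\|\cdot\|A_n f\|_{l_{2^{n-1}}^2(X^\ast)}.
\end{equation*}
Squaring and using $\|T_n\|^2 = 2^{n-1}\overline{C}_{NJ}^{(n)}(X)$ from Corollary \ref{T}, then dividing by $2^{n-1}\sum_j\|f_j\|^2$, one recognizes the left side as $C^{(n)}(f_1,\ldots,f_n)$, and obtains $C^{(n)}(f_1,\ldots,f_n) \geq 1/\overline{C}_{NJ}^{(n)}(X)$. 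Taking the infimum over $f$ finishes $(a)$.

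Part $(b)$ follows by applying $(a)$ to the Banach space $X^\ast$ in place of $X$, giving $\underline{C}_{NJ}^{(n)}(X^{\ast\ast}) \geq 1/\overline{C}_{NJ}^{(n)}(X^\ast)$, and then using the canonical isometric embedding $X\hookrightarrow X^{\ast\ast}$: since every tuple in $X^n$ is also a tuple in $(X^{\ast\ast})^n$ with the same value of $C^{(n)}$, enlarging the set over which the infimum is taken can only decrease it, so $\underline{C}_{NJ}^{(n)}(X) \geq \underline{C}_{NJ}^{(n)}(X^{\ast\ast})$.

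The main obstacle is the bookkeeping around viewing $A_n$ simultaneously as an operator on $l_n^2(X)$ and on $l_n^2(X^\ast)$, and correctly invoking the identity $\|T\|=\|T^\ast\|$ for the operator $T_n$; once this is set up cleanly, the rest of the argument reduces to the scalar matrix identity $A_n^T A_n = 2^{n-1}I_n$ and a one-line application of the submultiplicativity of operator norms. I do not expect any analytic subtleties beyond these bookkeeping issues.
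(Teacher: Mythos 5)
Your argument is correct and is essentially the paper's own proof: both rest on the identity $A_n^TA_n=2^{n-1}I_n$, the identification of $T_n^\ast$ with the operator generated by $A_n^T$ on $l^2_{2^{n-1}}(X^\ast)$, the equality $\Vert T_n\Vert=\Vert T_n^\ast\Vert$ combined with Corollary \ref{T}, and for $(b)$ the isometric embedding $X\hookrightarrow X^{\ast\ast}$. The paper phrases the key step via an explicit right inverse $S_n^\ast=\frac{1}{2^{n-1}}A_n$ of $T_n^\ast$, but that is exactly your inequality $2^{n-1}\Vert f\Vert=\Vert A_n^TA_nf\Vert\leq\Vert A_n^T\Vert\,\Vert A_nf\Vert$ rearranged.
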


\begin{proof} 
$(a)$ Define an operator $T_{n}:l_{n}^{2}(X)\rightarrow
l_{2^{n-1}}^{2}(X)$ as above. Let $T_{n}^{\ast }$ be the adjoint of operator 
$T_{n}.$ Obviously, $T_{n}^{\ast }:l_{2^{n-1}}^{2}(X^{\ast })\rightarrow
l_{n}^{2}(X^{\ast })$ is generated by the matrix $A_{n}^{\ast }=A_{n}^{T}.$
Then, by Corollary \ref{T}, we get 
\begin{equation}
\frac{1}{\overline{C}_{NJ}^{(n)}(X)}=\frac{2^{n-1}}{||T_{n}||^{2}}=\frac{%
2^{n-1}}{||T_{n}^{\ast }||^{2}}\leq \frac{2^{n-1}\left\Vert y^{\ast
}\right\Vert _{l_{2^{n-1}}^{2}(X^{\ast })}^{2}}{\left\Vert T_{n}^{\ast
}y^{\ast }\right\Vert _{l_{n}^{2}(X^{\ast })}^{2}}  \label{spr}
\end{equation}%
for any $y^{\ast }=(y_{1}^{\ast },y_{2}^{\ast },...,y_{2^{n-1}}^{\ast })\in
l_{2^{n-1}}^{2}(X^{\ast })\setminus \{\mathbf{0}\}.$ Let $S_{n}^{\ast }=%
\frac{1}{2^{n-1}}A_{n}.$ Then, $S_{n}:l_{n}^{2}(X^{\ast })\rightarrow
l_{2^{n-1}}^{2}(X^{\ast }).$ Since 
\begin{equation*}
A_{n}^{\ast }\cdot \frac{1}{2^{n-1}}A_{n}=I_{n},
\end{equation*}%
where $I_{n}$ is the identity matrix of size $n\times{n},$ it follows that 
\begin{equation*}
T_{n}^{\ast }\left( S_{n}^{\ast }x^{\ast }\right) =x^{\ast }
\end{equation*}%
for any $x^{\ast }=(x_{1}^{\ast },x_{2}^{\ast },...,x_{n}^{\ast })\in
l_{n}^{2}\left( X^{\ast }\right) .$ Hence$,$ by (\ref{spr}), we have 
\begin{eqnarray*}
\frac{1}{\overline{C}_{NJ}^{(n)}(X)} &\leq &\frac{2^{n-1}\left\Vert
S_{n}^{\ast }x^{\ast }\right\Vert _{l_{2^{n-1}}^{2}(X^{\ast })}^{2}}{%
\left\Vert T_{n}^{\ast }\left( S_{n}^{\ast }x^{\ast }\right) \right\Vert
_{l_{n}^{2}(X^{\ast })}^{2}}=\frac{\left\Vert A_{n}x^{\ast }\right\Vert
_{l_{2^{n-1}}^{2}(X^{\ast })}^{2}}{2^{n-1}\left\Vert x^{\ast }\right\Vert
_{l_{n}^{2}(X^{\ast })}^{2}} \\
&=&\frac{\sum_{\theta _{j}=\pm 1}\left\Vert x^{*}_{1}+\sum_{j=2}^{n}\theta
_{j}x^{*}_{j}\right\Vert _{X^{\ast }}^{2}}{2^{n-1}\sum_{j=1}^{n}\left\Vert
x^{*}_{j}\right\Vert _{X^{\ast }}^{2}}=C^{(n)}\left( x_{1}^{\ast },x_{2}^{\ast
},...,x_{n}^{\ast }\right)
\end{eqnarray*}%
for any $(x_{1}^{\ast },x_{2}^{\ast },...,x_{n}^{\ast })\in \left( X^{\ast
}\right) ^{n}\setminus \{\mathbf{0}\}.$ By the definition of the lower $n$%
-th von Neumann-Jordan constant, we get the thesis of $(a).$ \medskip\ 

$(b)$ Since $X$ can be isometrically embedded into $X^{\ast \ast },$ it
follows that $\underline{C}_{NJ}^{(n)}(X)\geq \underline{C}%
_{NJ}^{(n)}(X^{\ast \ast }).$ Hence, by $(a)$, we have 
\begin{equation*}
\underline{C}_{NJ}^{(n)}(X)\geq \underline{C}_{NJ}^{(n)}(X^{\ast \ast })\geq 
\frac{1}{\overline{C}_{NJ}^{(n)}(X^{\ast })}.
\end{equation*}
\end{proof}

Values of $n$-th von Neumann-Jordan constant some classical Banach spaces
gives the following.

\begin{proposition}
\label{CninL}Let $n\geq 2.$ \newline
\hspace*{0.3in}$(a)$ If $1\leq p\leq 2$ and $k=\dim l^{p}\geq n,$ then $%
\overline{C}_{NJ}^{(n)}(l^{p})=\overline{C}_{mNJ}^{(n)}(l^{p})=n^{\frac{2}{p}%
-1}.$ \newline
\hspace*{0.3in}$(b)$ If $\dim l^{\infty }\geq 2^{n-1},$ then $\overline{C}%
_{NJ}^{(n)}(l^{\infty })=\overline{C}_{mNJ}^{(n)}(l^{\infty })=n.$ \newline
\hspace*{0.3in}$(c)$ If $\dim l^{\infty }\geq n,$ then $\underline{C}%
_{NJ}^{(n)}(l^{\infty })=\underline{C}_{mNJ}^{(n)}(l^{\infty })=\frac{1}{n}.$
\newline
\hspace*{0.3in}$(d)$ Let $(\Omega ,\Sigma ,\mu )$ be a measure space with
non-atomic $\sigma $-finite measure $\mu .$ Then 
\begin{equation*}
\overline{C}_{mNJ}^{(n)}(L^{1}(\mu ))=\overline{C}_{NJ}^{(n)}(L^{1}(\mu ))=%
\overline{C}_{NJ}^{(n)}(L^{\infty }(\mu ))=\overline{C}_{NJ}^{(n)}(L^{\infty
}(\mu ))=n
\end{equation*}%
and 
\begin{equation*}
\underline{C}_{mNJ}^{(n)}(L^{\infty }(\mu ))=\underline{C}%
_{NJ}^{(n)}(L^{\infty }(\mu ))=\frac{1}{n}.
\end{equation*}
\end{proposition}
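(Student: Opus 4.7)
My plan is to handle all four parts uniformly by combining the universal bounds $\overline{C}_{NJ}^{(n)}(X)\le n$ and $\underline{C}_{NJ}^{(n)}(X)\ge 1/n$ from Proposition \ref{propCn}(a) with explicit $n$-tuples in $S(X)^{n}$ that realize the target values. Since $\underline{C}_{NJ}^{(n)}(X)\le\underline{C}_{mNJ}^{(n)}(X)\le 1\le\overline{C}_{mNJ}^{(n)}(X)\le\overline{C}_{NJ}^{(n)}(X)$ holds in every Banach space, any witness on the unit sphere simultaneously pins down the ordinary and the modified constant to the same value.

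For (a), the computation $\overline{C}_{NJ}^{(n)}(l^{p})=n^{2/p-1}$ for $1\le p\le 2$ is already carried out in \cite{KT3}, so it suffices to test $C^{(n)}$ on the standard unit vectors $e_{1},\dots,e_{n}\in S(l^{p})$; since they have pairwise disjoint supports, $\|e_{1}+\sum_{j\ge 2}\theta_{j}e_{j}\|_{p}=n^{1/p}$ for each of the $2^{n-1}$ sign patterns, and $\sum\|e_{j}\|^{2}=n$, giving $C^{(n)}(e_{1},\dots,e_{n})=n^{2/p-1}$. This forces $\overline{C}_{mNJ}^{(n)}(l^{p})\ge n^{2/p-1}$ and together with the universal sandwich closes (a).

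For (b) and (c) I pick two different configurations in $l^{\infty}$. For (c) the same basis vectors $e_{1},\dots,e_{n}$ suffice: $\|e_{1}+\sum_{j\ge 2}\theta_{j}e_{j}\|_{\infty}=1$ for every sign choice, so $C^{(n)}(e_{1},\dots,e_{n})=1/n$ matches the universal lower bound. For (b) I exploit the matrix $A_{n}$ defined just before Corollary \ref{T}: with $\dim l^{\infty}\ge 2^{n-1}$, let $x_{j}$ be the $j$-th column of $A_{n}$, so $x_{j}\in S(l^{\infty})$. The rows of $A_{n}$ enumerate exactly the $2^{n-1}$ sign patterns $(1,\theta_{2},\dots,\theta_{n})$, so for any such pattern the row matching it produces a coordinate of $x_{1}+\sum_{j\ge 2}\theta_{j}x_{j}$ equal to $n$. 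Hence $\|x_{1}+\sum_{j\ge 2}\theta_{j}x_{j}\|_{\infty}=n$ for every sign pattern and $C^{(n)}(x_{1},\dots,x_{n})=n$, attaining the universal upper bound.

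For (d) the non-atomic $\sigma$-finite hypothesis on $\mu$ lets me split a set of finite positive measure into any prescribed finite number of pairwise disjoint pieces of positive measure. Taking $n$ such pieces $E_{1},\dots,E_{n}$ and $f_{j}=\mu(E_{j})^{-1}\chi_{E_{j}}$ gives $\|f_{1}+\sum_{j\ge 2}\theta_{j}f_{j}\|_{1}=n$, hence $\overline{C}_{NJ}^{(n)}(L^{1}(\mu))=n$; for $L^{\infty}(\mu)$ I transplant (b) by weighting the indicators of $2^{n-1}$ disjoint pieces with the entries of $A_{n}$, and (c) by using $n$ disjoint pieces and plain indicators, yielding the upper value $n$ and the lower value $1/n$ respectively. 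The only real obstacle is the bookkeeping of sign patterns against the rows of $A_{n}$ in the $l^{\infty}$ and $L^{\infty}$ constructions; no new analytic estimate beyond the Clarkson-type inequality already invoked in \cite{KT3} is required.
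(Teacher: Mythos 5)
Your proposal is correct and follows essentially the same route as the paper: the universal bounds from Proposition \ref{propCn}(a) together with the cited value $\overline{C}_{NJ}^{(n)}(l^{p})=n^{2/p-1}$ from \cite{KT3} are matched by exactly the witnesses the authors use (the canonical basis for (a) and (c), the columns of $A_{n}$ for (b)). For (d) your explicit disjoint indicator functions are just the standard isometric copies of $l_{n}^{1}$ and $l_{2^{n-1}}^{\infty}$ that the paper invokes, so the argument is the same in substance.
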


\begin{proof} 
$(a)$ Let $k=\dim l^{p}\geq n.$ By Theorem 3 $(ii)$ in \cite%
{KT3}, $\overline{C}_{NJ}^{(n)}(l^{p})=n^{\frac{2}{p}-1}.$ By Proposition %
\ref{propCn}\textbf{~}$(b),$ $\overline{C}_{mNJ}^{(n)}(l^{p})\leq n^{\frac{2%
}{p}-1}.$ Taking the canonical basis $\left( e_{i}\right) _{i=1}^{k}$ $%
(n\leq k\leq \infty )$ in $l^{p},$ we get 
\begin{eqnarray*}
C^{(n)}\left( e_{1},e_{2},...,e_{n}\right) &=&\frac{\sum_{\theta _{j}=\pm
1}\left\Vert e_{1}+\sum_{j=2}^{n}\theta _{j}e_{j}\right\Vert _{l^{p}}^{2}}{%
2^{n-1}\sum_{j=1}^{n}\left\Vert e_{j}\right\Vert _{l^{p}}^{2}}= \\
&=&\frac{n^{\frac{2}{p}}2^{n-1}}{n2^{n-1}}=n^{\frac{2}{p}-1}.
\end{eqnarray*}%
Since $e_{i}\in S(l^{p})$ for $i\in \mathbb{N}\cap \lbrack 1,k],$ it follows
that 
\begin{equation*}
\overline{C}_{mNJ}^{(n)}(l^{p})\geq C^{(n)}\left(
e_{1},e_{2},...,e_{n}\right) =n^{\frac{2}{p}-1}.
\end{equation*}%
Hence $\overline{C}_{mNJ}^{(n)}(l^{p})=n^{\frac{2}{p}-1}.$

$(b)$ Let $k=\dim l^{\infty }\geq 2^{n-1}.$ Then $l^{\infty }$ is not
uniformly non-$l_{n}^{1}.$ By Theorem $5$ $(iv)$ in \cite{KT3} and
Proposition \ref{propCn}\textbf{~}$(a),$ we conclude that $\overline{C}%
_{NJ}^{(n)}(l^{\infty })=n.$ To prove that $\overline{C}_{mNJ}^{(n)}(l^{%
\infty })=n$ take the matrix $A_{n}$ defined as above. The column $j$ of $%
A_{n}$ denote by $\mathbf{a}_{j}^{(n)}.$ Then $\mathbf{a}%
_{j}^{(n)}=[1,a_{2j}^{(n)},...,a_{2^{n-1}j}^{(n)}],$ where $a_{ij}^{(n)}=\pm
1$ for any $1\leq j\leq n$ and $2\leq i\leq 2^{n-1}.$ Define for $j=1,2,\dots,n$,
\begin{equation*}
z_{j}=\sum\limits_{i=1}^{2^{n-1}}a_{ij}^{(n)}e_{i},
\end{equation*}%
where $\left( e_{i}\right) _{i=1}^{k}$ $(n\leq k\leq \infty )$ is the
canonical basis in $l^{\infty }.$ Obviously, $\left\Vert z_{j}\right\Vert
_{l^{\infty }}=1.$ Let $(1,\theta _{2},...,\theta _{n})$ be an arbitrary
sequence such that $\theta _{j}=\pm 1$ for any $2\leq j\leq n.$ Then there
is exactly one row $i_{0}$ of $A_{n}$ such that 
\begin{equation*}
\left[ 1,\theta _{2},...,\theta _{n}\right] =\left[
a_{i_{0}1}^{(n)},a_{i_{0}2}^{(n)},...,a_{i_{0}n}^{(n)}\right] .
\end{equation*}%
Hence 
\begin{equation*}
1+\sum_{j=2}^{n}\theta _{j}a_{i_{0}j}^{(n)}=n.
\end{equation*}%
Moreover 
\begin{equation*}
\left\vert 1+\sum_{j=2}^{n}\theta _{j}a_{ij}^{(n)}\right\vert <n
\end{equation*}%
for any $i\not=i_{0}.$ Consequently, 
\begin{equation*}
\left\Vert z_{1}+\sum_{j=2}^{n}\theta _{j}z_{j}\right\Vert _{l^{\infty
}}=\max_{1\leq i\leq 2^{n-1}}\left\vert 1+\sum_{j=2}^{n}\theta
_{j}a_{ij}^{(n)}\right\vert =n,
\end{equation*}%
whence 
\begin{equation*}
C^{(n)}\left( z_{1},z_{2},...,z_{n}\right) =\frac{\sum_{\theta _{j}=\pm
1}\left\Vert z_{1}+\sum_{j=2}^{n}\theta _{j}z_{j}\right\Vert _{l^{\infty
}}^{2}}{2^{n-1}\sum_{j=1}^{n}\left\Vert z_{j}\right\Vert _{l^{\infty }}^{2}}=%
\frac{n^{2}2^{n-1}}{n2^{n-1}}=n.
\end{equation*}%
Therefore 
\begin{equation*}
n\leq \overline{C}_{mNJ}^{(n)}(l^{\infty })\leq \overline{C}%
_{NJ}^{(n)}(l^{\infty })=n,
\end{equation*}%
which completes the proof of $(b).$

$(c)$ Let $k=\dim l^{\infty }\geq n$ and $(e_{i})$ be the canonical basis
in $l^{\infty }.$ By Proposition \ref{propCn} $(a),$ we have 
\begin{eqnarray*}
\frac{1}{n} &\leq &\underline{C}_{NJ}^{(n)}(l^{\infty })\leq \underline{C}%
_{mNJ}^{(n)}(l^{\infty })\leq C^{(n)}\left( e_{1},e_{2},...,e_{n}\right) \\
&=&\frac{\sum_{\theta _{j}=\pm 1}\left\Vert e_{1}+\sum_{j=2}^{n}\theta
_{j}e_{j}\right\Vert _{l^{\infty }}^{2}}{2^{n-1}\sum_{j=1}^{n}\left\Vert
e_{j}\right\Vert _{l^{\infty }}^{2}}=\frac{1}{n},
\end{eqnarray*}%
whenever $\dim l^{\infty }\geq n.$

$(d)$ Since $L^{1}(\mu )$ contains an isometric copy of $l^{1},$ applying
Proposition \ref{CninL} $(a)$ for $p=1$ we get 
\begin{equation*}
n=\overline{C}_{mNJ}^{(n)}(l^{1})\leq \overline{C}_{mNJ}^{(n)}(L^{1}(\mu
))\leq \overline{C}_{NJ}^{(n)}(L^{1}(\mu ))\leq n.
\end{equation*}%
Hence $\overline{C}_{mNJ}^{(n)}(L^{1}(\mu ))=\overline{C}_{NJ}^{(n)}(L^{1}(%
\mu ))=n.$ Using the same arguments, by Proposition \ref{CninL} $(b)$ we
conclude that $\overline{C}_{NJ}^{(n)}(L^{\infty }(\mu ))=\overline{C}%
_{mNJ}^{(n)}(L^{\infty }(\mu ))=n.$ Similarly, by Proposition \ref{CninL} $%
(c),$ we obtain 
\begin{equation*}
\frac{1}{n}\leq \underline{C}_{NJ}^{(n)}(L^{\infty }(\mu ))\leq \underline{C}%
_{mNJ}^{(n)}(L^{\infty }(\mu ))\leq \underline{C}_{mNJ}^{(n)}(l^{\infty })=%
\frac{1}{n},
\end{equation*}%
which finishes the proof. 
\end{proof}

\begin{center}
\textbf{3. Uniformly non-}$l_{n}^{1}$ \textbf{spaces}
\end{center}

The next theorem gives some characterizations of the uniform non-$l_{n}^{1}$
property for Banach spaces. Kato, Takahashi and Hashimito proved in \cite%
{KT3} that $(X,\left\Vert \cdot \right\Vert _{X})$ is uniformly non-$%
l_{n}^{1}$ iff $\overline{C}_{NJ}^{(n)}(X)<n.$ We will extend their results.

\begin{theorem}
\label{nonln1}Let $(X,\left\Vert \cdot \right\Vert _{X})$ be a Banach space.
Then the following conditions are equivalent \newline
\hspace*{0.3in}$(a)$ $\overline{C}_{mNJ}^{(n)}(X)<n.$ \newline
\hspace*{0.3in}$(b)$ $(X,\left\Vert \cdot \right\Vert _{X})$ is uniformly
non-$l_{n}^{1}.$ \newline
\hspace*{0.3in}$(c)$ there exists $\delta \in (0,1)$ such that for any
element $(x_{1},x_{2},...,x_{n})\in B\left( l_{n}^{2}\left( X\right) \right)
,$ we have%
\begin{equation}
\min_{\theta _{j}=\pm 1}\left\Vert x_{1}+\sum\nolimits_{j=2}^{n}\theta
_{j}x_{j}\right\Vert _{X}\leq \sqrt{n}(1-\delta ).  \label{delta}
\end{equation}
\newline
\hspace*{0.3in}$(d)$ there exists $\delta \in (0,1)$ such that for any
element $(x_{1},x_{2},...,x_{n})\in S\left( l_{n}^{2}\left( X\right) \right)
,$ the inequality (\ref{delta}) is satisfied. \ 
\end{theorem}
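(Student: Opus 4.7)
I will prove the cycle $(a)\Rightarrow(b)\Rightarrow(d)\Rightarrow(c)\Rightarrow(a)$. The Kato--Takahashi--Hashimoto equivalence quoted just before the theorem, namely $X$ is uniformly non-$l_{n}^{1}$ iff $\overline{C}_{NJ}^{(n)}(X)<n$, is the bridge that lets me pass from the modified constant in $(a)$ to the non-modified constant controlling $(d)$. Three normalizations drive the argument: for $x_{j}\in S(X)$ one has $\sum_{j}\|x_{j}\|_X^{2}=n$; for $(x_{1},\dots,x_{n})\in S(l_{n}^{2}(X))$ one has $\sum_{j}\|x_{j}\|_X^{2}=1$; and these two situations are connected by the rescaling $x_{j}\mapsto x_{j}/\sqrt{n}$.

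\textbf{Implementation.} For $(a)\Rightarrow(b)$, I take $x_{j}\in S(X)$ and use that the minimum is at most the average: $\min_{\theta}\|x_{1}+\sum_{j\ge 2}\theta_{j}x_{j}\|_X^{2}$ is bounded by $n\,C^{(n)}(x_{1},\dots,x_{n})\le n\,\overline{C}_{mNJ}^{(n)}(X)$, which is strictly less than $n^{2}$, so $\min_{\theta}\|\cdot\|_X\le n(1-\delta)$ with $\delta=1-\sqrt{\overline{C}_{mNJ}^{(n)}(X)/n}>0$. For $(b)\Rightarrow(d)$, I invoke Kato--Takahashi--Hashimoto to get $\overline{C}_{NJ}^{(n)}(X)<n$ and run the same min-vs-average estimate on $(x_{1},\dots,x_{n})\in S(l_{n}^{2}(X))$, where the denominator of $C^{(n)}$ is $1$ and the bound $\min_{\theta}\|\cdot\|_X^{2}\le \overline{C}_{NJ}^{(n)}(X)<n$ yields $(d)$ with $\delta=1-\sqrt{\overline{C}_{NJ}^{(n)}(X)/n}$. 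For $(d)\Rightarrow(c)$, I rescale a nonzero element of $B(l_{n}^{2}(X))$ to the unit sphere, apply $(d)$, and observe that the scaling factor is $\le 1$. Finally, for $(c)\Rightarrow(a)$, I apply $(c)$ to $(x_{1}/\sqrt{n},\dots,x_{n}/\sqrt{n})\in S(l_{n}^{2}(X))\subset B(l_{n}^{2}(X))$ for $x_{j}\in S(X)$, which after rescaling gives $\min_{\theta}\|x_{1}+\sum_{j\ge 2}\theta_{j}x_{j}\|_X\le n(1-\delta)$; combining this with the trivial bound $\|x_{1}+\sum_{j\ge 2}\theta_{j}x_{j}\|_X\le n$ on the other $2^{n-1}-1$ summands pushes $\sum_{\theta}\|\cdot\|_X^{2}$ strictly below $2^{n-1}n^{2}$, so $C^{(n)}(x_{1},\dots,x_{n})<n$ uniformly in $x_{j}\in S(X)$.

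\textbf{Main obstacle.} No single step is analytically deep: each implication reduces to min $\le$ average, homogeneity, or the triangle inequality. The real care is quantitative book-keeping, since the various $\delta$s must remain strictly positive after taking square roots and after the losses of $\sqrt{n}$ factors between the ``each coordinate in $S(X)$'' and ``tuple in $S(l_{n}^{2}(X))$'' normalizations. The most delicate passage is $(c)\Rightarrow(a)$: there the hypothesis controls only the minimizing $\theta$, while the target is an average over all $2^{n-1}$ choices of $\theta$, so the gain on one summand has to be shown to survive when the remaining $2^{n-1}-1$ terms are bounded only by the sharp triangle inequality $\|x_{1}+\sum_{j\ge 2}\theta_{j}x_{j}\|_X\le n$.
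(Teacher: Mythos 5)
Your proposal is correct, and every step checks out: $(a)\Rightarrow(b)$ and $(c)\Rightarrow(a)$ are the same min-versus-average arguments the paper uses (the paper runs the latter as $(d)\Rightarrow(a)$ on $S(l_n^2(X))$ with the H\"older bound $\sum_j\|x_j\|_X\le\sqrt n$, while you run it on $S(X)^n$ with the plain triangle inequality; both give $C^{(n)}\le n-\delta n(2-\delta)/2^{n-1}$). Where you genuinely diverge is in passing from $(b)$ to the quantitative conditions $(c)$ and $(d)$. The paper proves $(b)\Rightarrow(c)$ directly from the Kami\'nska--Turett characterization of uniform non-$l_n^1$, and this is the delicate part of their proof: the Kami\'nska--Turett bound degenerates when some $\|x_j\|_X$ is small, forcing a two-case split on $\min_j\|x_j\|_X$ and, in the second case, a separate elementary estimate via the function $f(t)=\sqrt{n-1}\sqrt{1-t^2}+t$. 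You bypass all of this by invoking the Kato--Takahashi--Hashimoto equivalence ($X$ uniformly non-$l_n^1$ iff $\overline{C}_{NJ}^{(n)}(X)<n$), which the paper quotes as known immediately before the theorem, and then reading off $(d)$ from $\min\le$ average on $S(l_n^2(X))$ and $(c)$ from $(d)$ by homogeneity. This is legitimate and noticeably shorter, but it outsources the analytic content of the hardest implication to the cited result in \cite{KT3}, whereas the paper's two-case argument keeps that step self-contained (modulo only the Kami\'nska--Turett reformulation of the definition). Your traversal order $(a)\Rightarrow(b)\Rightarrow(d)\Rightarrow(c)\Rightarrow(a)$ still covers all four equivalences, so the logic is complete.
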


\begin{proof} 
$(a)\Rightarrow (b).$ Suppose that $\overline{C}%
_{mNJ}^{(n)}(X)<n.$ Then 
\begin{equation*}
\frac{1}{2^{n-1}}\sum_{\theta _{j}=\pm 1}\left\Vert
x_{1}+\sum\nolimits_{j=2}^{n}\theta _{j}x_{j}\right\Vert _{X}^{2}\leq{}n\overline{%
C}_{mNJ}^{(n)}(X)
\end{equation*}%
for any $x_{1},x_{2},...,x_{n}\in S\left( X\right) .$ Since on the left hand
side we have an arithmetic mean, there is at least one sequence $(1,%
\overline{\theta }_{2},...,\overline{\theta }_{n})$ such that 
\begin{equation*}
\left\Vert x_{1}+\sum\nolimits_{j=2}^{n}\overline{\theta }%
_{j}x_{j}\right\Vert _{X}^{2}\leq n\overline{C}_{mNJ}^{(n)}(X).
\end{equation*}%
Hence 
\begin{equation*}
\min_{\theta _{j}=\pm 1}\left\Vert x_{1}+\sum\nolimits_{j=2}^{n}\theta
_{j}x_{j}\right\Vert _{X}\leq n\sqrt{\frac{\overline{C}_{mNJ}^{(n)}(X)}{n}}%
=n\left( 1-\delta \right) ,
\end{equation*}%
where $\delta =\frac{\sqrt{n}-\sqrt{\overline{C}_{mNJ}^{(n)}(X)}}{\sqrt{n}}.$
Concequently, $\left( X,\left\Vert \cdot \right\Vert _{X}\right) $ is
uniformly non-$l_{n}^{1}.$ \smallskip\ 

$(b)\Rightarrow (c).$ Assume that $\left( X,\left\Vert \cdot \right\Vert
_{X}\right) $ is uniformly non-$l_{n}^{1}.$ Let $(x_{1},x_{2},...,x_{n})\in
B\left( l_{n}^{2}\left( X\right) \right) .$ Since $\sum_{j=1}^{n}\left\Vert
x_{j}\right\Vert _{X}^{2}\leq 1,$ it follows that $\min_{1\leq j\leq
n}\left\Vert x_{j}\right\Vert _{X}\leq \frac{1}{\sqrt{n}}.$ Moreover, by the
H\"{o}lder inequality we have 
\begin{equation}
\sum_{j=1}^{n}\left\Vert x_{j}\right\Vert _{X}\leq \sqrt{n}\left(
\sum_{j=1}^{n}\left\Vert x_{j}\right\Vert _{X}^{2}\right) ^{1/2}\leq \sqrt{n}%
.  \label{H}
\end{equation}

Case $1.$ Suppose that $\frac{1}{2\sqrt{n}}<\min_{1\leq j\leq n}\left\Vert
x_{j}\right\Vert _{X}\leq \frac{1}{\sqrt{n}}.$ By the characterization of
uniform non-$l_{n}^{1}$ given in \cite{KaT} and by the inequality \eqref{H},
there is $\delta _{1}>0$ such that 
\begin{eqnarray*}
\min_{\theta _{j}=\pm 1}\left\Vert x_{1}+\sum\nolimits_{j=2}^{n}\theta
_{j}x_{j}\right\Vert _{X} &\leq &\left( 1-\frac{\delta _{1}n\min_{1\leq
i\leq n}\left\Vert x_{i}\right\Vert _{X}}{\sum_{j=1}^{n}\left\Vert
x_{j}\right\Vert _{X}}\right) \sum_{j=1}^{n}\left\Vert x_{j}\right\Vert _{X}
\\
&\leq &\left( 1-\frac{\delta _{1}\sqrt{n}}{2\sum_{j=1}^{n}\left\Vert
x_{j}\right\Vert _{X}}\right) \sum_{j=1}^{n}\left\Vert x_{j}\right\Vert _{X}
\\
&\leq &\sqrt{n}\left( 1-\frac{\delta _{1}}{2}\right) .
\end{eqnarray*}
\smallskip\ 

Case $2.$ Suppose that $0\leq \min_{1\leq j\leq n}\left\Vert x_{j}\right\Vert
_{X}\leq \frac{1}{2\sqrt{n}}.$ Let $x_{k}$ be the element on which the
minimum is taken. Then, by the H\"{o}lder inequality, we have 
\begin{eqnarray*}
\left\Vert x_{1}\pm x_{2}\pm ...\pm x_{n}\right\Vert _{X} &\leq
&\sum_{j=1,j\neq k}^{n}\left\Vert x_{j}\right\Vert _{X}+\left\Vert
x_{k}\right\Vert _{X} \\
&\leq &\sqrt{n-1}\sqrt{\sum_{j=1,j\neq k}^{n}\left\Vert x_{j}\right\Vert
_{X}^{2}}+\left\Vert x_{k}\right\Vert _{X} \\
&\leq &\sqrt{n-1}\sqrt{1-\left\Vert x_{k}\right\Vert _{X}^{2}}+\left\Vert
x_{k}\right\Vert _{X}
\end{eqnarray*}%
for any choice of signs. Define 
\begin{equation*}
f(t)=\sqrt{n-1}\sqrt{1-t^{2}}+t
\end{equation*}
for any $t\in \left[ 0,\frac{1}{2\sqrt{n}}\right] .$ By elementary calculus
we conclude that $f$ is an increasing function on the interval $\left[ 0,%
\frac{1}{2\sqrt{n}}\right] .$ Hence, the function $f(t)$ takes its highest
value on $\left[ 0,\frac{1}{2\sqrt{n}}\right] $ at the point $t=\frac{1}{2%
\sqrt{n}}.$ Thus, 
\begin{eqnarray*}
\left\Vert x_{1}\pm x_{2}\pm ...\pm x_{n}\right\Vert _{X} &\leq &\sqrt{n-1}%
\sqrt{1-\left( \frac{1}{2\sqrt{n}}\right) ^{2}}+\frac{1}{2\sqrt{n}} \\
&=&\frac{1}{2\sqrt{n}}\left( \sqrt{\left( 4n-1\right) \left( n-1\right) }%
+1\right) \\
&=&\sqrt{n}\left( 1-\frac{\left( 2n-1\right) -\sqrt{\left( 4n-1\right)
\left( n-1\right) }}{2n}\right)
\end{eqnarray*}%
for any choice of signs. Taking 
\begin{equation*}
\delta =\min \left\{ \frac{\delta _{1}}{2},\frac{\left( 2n-1\right) -\sqrt{%
\left( 4n-1\right) \left( n-1\right) }}{2n}\right\} ,
\end{equation*}
we get $(c).$ \smallskip\ 

$(c)\Rightarrow (d).$ It is obvious. \smallskip\ 

$(d)\Rightarrow (a).$ Let $(x_{1},x_{2},...,x_{n})\in S\left(
l_{n}^{2}\left( X\right) \right) .$ By the assumption $(d)$ there exists $%
\delta \in \left( 0,1\right) $ such that 
\begin{equation*}
\left\Vert x_{1}\pm x_{2}\pm ...\pm x_{n}\right\Vert _{X}\leq \sqrt{n}\left(
1-\delta \right)
\end{equation*}%
for some choice of signs. Moreover, by (\ref{H}), $\left\Vert x_{1}\pm
x_{2}\pm ...\pm x_{n}\right\Vert _{X}\leq \sqrt{n}$ for any choice of signs.
Hence, we have 
\begin{eqnarray*}
\frac{\sum_{\theta _{j}=\pm 1}\left\Vert x_{1}+\sum_{j=2}^{n}\theta
_{j}x_{j}\right\Vert _{X}^{2}}{2^{n-1}\sum_{j=1}^{n}\left\Vert
x_{j}\right\Vert _{X}^{2}} &\leq &\frac{n\left( 1-\delta \right)
^{2}+n\left( 2^{n-1}-1\right) }{2^{n-1}} \\
&=&n-\frac{\delta n\left( 2-\delta \right) }{2^{n-1}}.
\end{eqnarray*}%
By the definition of the upper $n$-th von Neumann-Jordan constant $\overline{%
C}_{NJ}^{(n)}(X)$ and Proposition \ref{D}, we conclude 
\begin{equation*}
\overline{C}_{mNJ}^{(n)}(X)\leq \overline{C}_{NJ}^{(n)}(X)\leq n-\frac{%
\delta n\left( 2-\delta \right) }{2^{n-1}}<n,
\end{equation*}%
which finishes the proof. 
\end{proof}

By Theorem 3 and the definition of $B$-convexity, we get immediately
\smallskip\ 

\begin{corollary}
\label{B}A Banach space $(X,\left\Vert \cdot \right\Vert _{X})$ is B-convex
if and only if there is $n\geq 2$ $(n\in N)$ such that $\overline{C}%
_{mNJ}^{(n)}(X)<n.$
\end{corollary}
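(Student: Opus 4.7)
The plan is to derive this as an immediate consequence of Theorem \ref{nonln1} combined with the definition of $B$-convexity. Recall that by definition, $X$ is $B$-convex precisely when there exists some integer $n\geq 2$ such that $X$ is uniformly non-$l_n^1$. So the statement reduces to exchanging, for each fixed $n$, the condition ``$X$ is uniformly non-$l_n^1$'' with the condition ``$\overline{C}_{mNJ}^{(n)}(X)<n$''.

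First I would quantify the equivalence $(a)\Leftrightarrow(b)$ from Theorem \ref{nonln1}: for each fixed $n\geq 2$, the space $X$ is uniformly non-$l_n^1$ if and only if $\overline{C}_{mNJ}^{(n)}(X)<n$. Then I would take the existential quantifier over $n\geq 2$ on both sides. On the left, ``$\exists n\geq 2$ such that $X$ is uniformly non-$l_n^1$'' is exactly the definition of $B$-convexity recalled in Section 1 (following Beck \cite{B}). On the right, we obtain exactly the condition stated in the corollary, namely the existence of some $n\geq 2$ with $\overline{C}_{mNJ}^{(n)}(X)<n$.

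There is no real obstacle here, since all the work has been done in Theorem \ref{nonln1}; the corollary is essentially a translation of that theorem into the language of $B$-convexity. The only thing to be careful about is making sure the quantification on $n$ is handled consistently on both sides of the equivalence, but since Theorem \ref{nonln1} holds for every $n\geq 2$, this is automatic.
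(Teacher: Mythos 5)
Your proposal is correct and matches the paper exactly: the paper derives the corollary immediately from the equivalence $(a)\Leftrightarrow(b)$ of Theorem \ref{nonln1} together with the definition of $B$-convexity as the existence of some $n\geq 2$ for which $X$ is uniformly non-$l_{n}^{1}$. Your care with the quantification over $n$ is the only point of substance, and you handle it correctly.
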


Notice that $\overline{C}_{mNJ}^{(n)}(X)$ is not equal to $\overline{C}%
_{NJ}^{(n)}(X)$ in general (for $n=2$ see \cite{MS}).

\begin{corollary}
\label{CC}$\overline{C}_{mNJ}^{(n)}(X)=n$ if and only if $\overline{C}%
_{NJ}^{(n)}(X)=n.$
\end{corollary}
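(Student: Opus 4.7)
The plan is to deduce the corollary directly by stringing together Proposition~\ref{propCn}$(a)$, Theorem~\ref{nonln1}, and the cited result of Kato, Takahashi and Hashimoto from \cite{KT3}. There is essentially no calculation to do; the work has already been packaged into those statements.

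First I would record the trivial half. By Proposition~\ref{propCn}$(a)$ we have
\begin{equation*}
1\leq \overline{C}_{mNJ}^{(n)}(X)\leq \overline{C}_{NJ}^{(n)}(X)\leq n,
\end{equation*}
so if $\overline{C}_{mNJ}^{(n)}(X)=n$, then $\overline{C}_{NJ}^{(n)}(X)$ is sandwiched between $n$ and $n$ and must equal $n$. This takes care of the $(\Rightarrow)$ direction without appealing to anything beyond Proposition~\ref{propCn}.

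For the nontrivial direction $(\Leftarrow)$ I would argue by contrapositive. Suppose $\overline{C}_{mNJ}^{(n)}(X)<n$. By Theorem~\ref{nonln1}, the implication $(a)\Rightarrow(b)$ tells us that $X$ is uniformly non-$l_n^1$. But then the Kato--Takahashi--Hashimoto characterization quoted just before Theorem~\ref{nonln1} — namely that $X$ is uniformly non-$l_n^1$ if and only if $\overline{C}_{NJ}^{(n)}(X)<n$ — gives $\overline{C}_{NJ}^{(n)}(X)<n$. Contrapositively, $\overline{C}_{NJ}^{(n)}(X)=n$ forces $\overline{C}_{mNJ}^{(n)}(X)=n$, since the only alternative to equality is a strict inequality (both constants take values in $[1,n]$).

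There is no real obstacle; the corollary is simply the observation that Theorem~\ref{nonln1} upgrades the Kato--Takahashi--Hashimoto characterization from $\overline{C}_{NJ}^{(n)}$ to $\overline{C}_{mNJ}^{(n)}$, and since uniform non-$l_n^1$ is the common threshold for both constants to drop strictly below $n$, the two constants must hit the maximum value $n$ together. If anything, the only thing worth being careful about is that both constants are \emph{a priori} bounded above by $n$, so ``not strictly less than $n$'' is the same as ``equal to $n$''; this is exactly what Proposition~\ref{propCn}$(a)$ guarantees.
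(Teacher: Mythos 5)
Your proposal is correct and follows essentially the same route as the paper: both reduce the statement to the equivalence of the two strict inequalities $\overline{C}_{NJ}^{(n)}(X)<n$ and $\overline{C}_{mNJ}^{(n)}(X)<n$ via Theorem~\ref{nonln1} together with the Kato--Takahashi--Hashimoto characterization, and then use the bounds in Proposition~\ref{propCn}$(a)$ to convert ``not strictly less than $n$'' into ``equal to $n$.'' Your handling of the $(\Rightarrow)$ direction by the sandwich $\overline{C}_{mNJ}^{(n)}(X)\leq \overline{C}_{NJ}^{(n)}(X)\leq n$ is a marginally more direct phrasing of the same fact.
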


\begin{proof}
Since $(X,\left\Vert \cdot \right\Vert _{X})$ is uniformly
non-$l_{n}^{1}$ iff $\overline{C}_{NJ}^{(n)}(X)<n,$ it follows, by Theorem %
\ref{nonln1}, that $\overline{C}_{NJ}^{(n)}(X)<n$ iff $\overline{C}%
_{mNJ}^{(n)}(X)<n.$ Hence, by Proposition \ref{propCn} \ $(a),$ we get the
thesis.
\end{proof}

\begin{remark}
	Let us notice that the above corollary can be reformulated equivalently as follows 
	\begin{equation*}
	\overline{C}_{mNJ}^{(n)}(X)<n \quad\textnormal{ if and only if }\quad\overline{C}_{NJ}^{(n)}(X)<n.
	\end{equation*}
\end{remark}

\begin{center}
\textbf{4. Upper and lower }$n$\textbf{-th von Neumann-Jordan constant for }$%
L^{p}$\textbf{-spaces}
\end{center}

Now we will calculate the upper $n$-th von Neumann-Jordan constant for
Lebesgue spaces $L^{p}(\mu )$ and $l^{p}$ $(1<p<\infty )$. To prove the next
lemma, we will apply the following results given by Figiel, Iwaniec and Pe\l %
czy\'{n}ski in \cite{FIP}. Namely, for arbitrary scalars $%
c_{1},c_{2},...,c_{n}$ and $2<p<\infty $ we have 
\begin{equation}
\int_{0}^{1}\left\vert \sum_{j=1}^{n}c_{j}r_{j}(t)\right\vert ^{p}dt\leq
n^{-1}\int_{0}^{1}\left\vert \sum_{j=1}^{n}r_{j}(t)\right\vert
^{p}dt\sum_{j=1}^{n}\left\vert c_{j}\right\vert ^{p},  \label{fig}
\end{equation}%
where $r_{1},r_{2},...,r_{n}$ $(n=1,2,...)$ are Rademacher functions, that
is $r_{n}(t)=$ sign$\left( \sin 2^{n}\pi t\right) .$

Let $\left\lfloor \cdot \right\rfloor :\mathbb{R}\rightarrow \mathbb{Z}$ be
the floor function, i.e. $\left\lfloor x\right\rfloor =\max \left\{ m\in 
\mathbb{Z}:m\leq x\right\} $ for any $x\in \mathbb{R}.$

\begin{lemma}
\label{ineqlp}Let $2<p<\infty $ and $X=L^{p}(\mu )$ or $X=l^{p}.$ Then 
\begin{equation*}
\sum_{\theta _{j}=\pm 1}\left\Vert x_{1}+\sum_{j=2}^{n}\theta
_{j}x_{j}\right\Vert _{X}^{p}\leq n^{-1}\sum\limits_{k=0}^{\left\lfloor
n/2\right\rfloor }\binom{n}{k}\left( n-2k\right)
^{p}\sum_{j=1}^{n}\left\Vert x_{j}\right\Vert _{X}^{p}
\end{equation*}%
for any $x_{1},x_{2},...,x_{n}\in X$ and any integer $n\geq 1.$ \smallskip\ 
\end{lemma}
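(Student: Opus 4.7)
The plan is to reduce to the scalar case by Fubini's theorem and then apply the Figiel--Iwaniec--Pe\l czy\'nski inequality \eqref{fig} after rewriting the sum over signs as an integral against Rademacher functions.

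First, I would reduce to scalars. For $X=L^p(\mu)$, Fubini gives
\begin{equation*}
\sum_{\theta_j=\pm 1}\Bigl\|x_1+\sum_{j=2}^n\theta_j x_j\Bigr\|_X^p
=\int \sum_{\theta_j=\pm 1}\Bigl|x_1(\omega)+\sum_{j=2}^n\theta_j x_j(\omega)\Bigr|^p d\mu(\omega),
\end{equation*}
so it suffices to prove the claimed inequality pointwise for real numbers $c_1,\dots,c_n$ (with $c_j=x_j(\omega)$) and then integrate, using $\int|x_j|^p d\mu=\|x_j\|_X^p$. For $X=l^p$ the same argument works with counting measure. Thus the whole lemma reduces to the scalar inequality
\begin{equation*}
\sum_{\theta_j=\pm 1}\Bigl|c_1+\sum_{j=2}^n\theta_j c_j\Bigr|^p
\le n^{-1}\sum_{k=0}^{\lfloor n/2\rfloor}\binom{n}{k}(n-2k)^p \sum_{j=1}^n |c_j|^p.
\end{equation*}

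Next I would symmetrize by freeing up $\theta_1$. Since the map $(\theta_1,\dots,\theta_n)\mapsto-(\theta_1,\dots,\theta_n)$ preserves the absolute value, the sum over $(\theta_2,\dots,\theta_n)\in\{\pm 1\}^{n-1}$ equals exactly half of the full sum
\begin{equation*}
\sum_{\theta\in\{\pm 1\}^n}\Bigl|\sum_{j=1}^n\theta_j c_j\Bigr|^p .
\end{equation*}
Because the joint distribution of $(r_1,\dots,r_n)$ on $[0,1]$ is uniform on $\{\pm 1\}^n$, this full sum equals $2^n\int_0^1|\sum_{j=1}^n c_j r_j(t)|^p dt$. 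Applying \eqref{fig} bounds this by
\begin{equation*}
\frac{2^n}{n}\int_0^1\Bigl|\sum_{j=1}^n r_j(t)\Bigr|^p dt\cdot \sum_{j=1}^n|c_j|^p
=\frac{1}{n}\sum_{\theta\in\{\pm 1\}^n}\Bigl|\sum_{j=1}^n\theta_j\Bigr|^p\cdot\sum_{j=1}^n|c_j|^p.
\end{equation*}

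Finally I would evaluate the combinatorial constant. Grouping the $2^n$ sign patterns by the number $k$ of minus signs gives $\sum_\theta|\sum_j\theta_j|^p=\sum_{k=0}^n\binom{n}{k}|n-2k|^p$, and the symmetry $k\mapsto n-k$ (together with the vanishing middle term when $n$ is even) collapses this to $2\sum_{k=0}^{\lfloor n/2\rfloor}\binom{n}{k}(n-2k)^p$. Dividing by $2$ (to undo the doubling from the symmetrization step) yields the scalar inequality with exactly the stated constant, after which integration against $\mu$ finishes the proof.

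The only mild obstacle is accounting honestly for the factors of $2$: one factor from $\theta\mapsto-\theta$, one from $2^n$ versus the integral average $2^{-n}\sum_\theta$, and the final one from $|n-2k|^p$ symmetry that produces the $\lfloor n/2\rfloor$ cutoff in the binomial sum. Everything else is a direct application of \eqref{fig} and Fubini.
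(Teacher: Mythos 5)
Your proposal is correct and follows essentially the same route as the paper: reduce to the scalar inequality pointwise (coordinatewise for $l^p$, $\mu$-a.e.\ for $L^p(\mu)$), convert the sign sum into the Rademacher integral, apply the Figiel--Iwaniec--Pe\l czy\'nski inequality \eqref{fig}, and evaluate $\int_0^1\bigl|\sum_{j=1}^n r_j(t)\bigr|^p\,dt$ combinatorially. Your bookkeeping of the factors of $2$ (the $\theta\mapsto-\theta$ symmetry, the $2^{-n}$ average, and the $k\mapsto n-k$ collapse of the binomial sum) is exactly the content the paper leaves as ``can be proved elementarily,'' and it checks out.
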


\begin{proof}
Fix an integer $n\geq 1.$ Notice that 
\begin{equation*}
\int_{0}^{1}\left\vert \sum_{j=1}^{n}c_{j}r_{j}(t)\right\vert
^{p}dt=2^{1-n}\sum_{\theta _{j}=\pm 1}\left\vert c_{1}+\sum_{j=2}^{n}\theta
_{j}c_{j}\right\vert ^{p}
\end{equation*}%
for any scalars $c_{1},c_{2},...,c_{n}.$ On the other hand, it can be proved
elementarily that 
\begin{equation*}
\int_{0}^{1}\left\vert \sum_{j=1}^{n}r_{j}(t)\right\vert
^{p}dt=2^{1-n}\sum\limits_{k=0}^{\left\lfloor n/2\right\rfloor }\binom{n}{k}%
\left( n-2k\right) ^{p}.
\end{equation*}%
Suppose that $x_{k}=\left( t_{i}^{(k)}\right) _{i=1}^{\infty }\in l^{p}$ for 
$k=1,2,...,n.$ By the inequality (\ref{fig}), we get 
\begin{equation*}
\sum_{\theta _{j}=\pm 1}\left\vert t_{i}^{(1)}+\sum_{j=2}^{n}\theta
_{j}t_{i}^{(j)}\right\vert ^{p}\leq n^{-1}\sum\limits_{k=0}^{\left\lfloor
n/2\right\rfloor }\binom{n}{k}\left( n-2k\right)
^{p}\sum_{j=1}^{n}\left\vert t_{i}^{(j)}\right\vert ^{p}
\end{equation*}%
for any $i\in \mathbb{N}.$ Summing by sides from $i=1$ to $\infty $ and
reversing the order of summation, we obtain the thesis. Similarly, for $%
X=L^{p}(\mu )$ take $x_{1},x_{2},...,x_{n}\in L^{p}(\mu ).$ Then, by the
inequality (\ref{fig}), we get 
\begin{equation*}
\sum_{\theta _{j}=\pm 1}\left\vert x_{1}(t)+\sum_{j=2}^{n}\theta
_{j}x_{j}(t)\right\vert ^{p}\leq n^{-1}\sum\limits_{k=0}^{[n/2]}\binom{n}{k}%
\left( n-2k\right) ^{p}\sum_{j=1}^{n}\left\vert x_{j}(t)\right\vert ^{p}
\end{equation*}%
for almost every $t\in \Omega .$ Integrating by sides this inequality and
reversing the order of summation and integration, we obtain the desired
inequality.
\end{proof}

\begin{theorem}
\label{mCJ}Let $1\leq p<\infty $ and $X=L^{p}(\mu )$ or $X=l^{p}.$ Then 
\begin{equation*}
\overline{C}_{mNJ}^{(n)}(X)=\left\{ 
\begin{array}{lll}
n^{\frac{2}{p}-1} & \text{\textit{if}} & 1\leq p\leq 2\text{ and }\dim X\geq
n,\text{ } \\ 
n^{-1}\left( 2^{1-n}\sum\limits_{k=0}^{\left\lfloor n/2\right\rfloor }\binom{%
n}{k}\left( n-2k\right) ^{p}\right) ^{\frac{2}{p}} & \text{\textit{if}} & 
2<p<\infty \text{ and }\dim X\geq 2^{n-1}.%
\end{array}%
\right.
\end{equation*}
\end{theorem}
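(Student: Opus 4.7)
The plan is to split into the two regimes $1\leq p\leq 2$ and $2<p<\infty$. For $1\leq p\leq 2$ the upper estimate is immediate from Proposition~\ref{propCn}(a) combined with the identity $\overline{C}_{NJ}^{(n)}(X)=n^{2/p-1}$ proved in Theorem~3 of \cite{KT3}. The matching lower estimate is produced by evaluating $C^{(n)}$ at the canonical basis vectors of an $n$-dimensional $l^p$-subspace of $X$ (isometrically embedded, which is possible since $\dim X\geq n$), reproducing the calculation in the proof of Proposition~\ref{CninL}(a).

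For $2<p<\infty$ the upper bound combines Lemma~\ref{ineqlp} with Jensen's inequality (the power-mean inequality). Fix $x_1,\ldots,x_n\in S(X)$, so $\sum_{j=1}^n\|x_j\|_X^p=n$. Lemma~\ref{ineqlp} yields
\[
\sum_{\theta_j=\pm 1}\Bigl\|x_1+\sum_{j=2}^n\theta_j x_j\Bigr\|_X^p\;\leq\;\sum_{k=0}^{\lfloor n/2\rfloor}\binom{n}{k}(n-2k)^p,
\]
and applying convexity of $t\mapsto t^{p/2}$ ($p/2\geq 1$) to the $2^{n-1}$ nonnegative numbers on the left gives
\[
\sum_{\theta_j=\pm 1}\Bigl\|x_1+\sum_{j=2}^n\theta_j x_j\Bigr\|_X^2\;\leq\;2^{(n-1)(1-2/p)}\Bigl(\sum_{k=0}^{\lfloor n/2\rfloor}\binom{n}{k}(n-2k)^p\Bigr)^{2/p}.
\]
Dividing by $2^{n-1}n$ produces the announced upper bound.

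For the matching lower bound I exhibit explicit $z_1,\ldots,z_n\in S(X)$ making \emph{both} preceding inequalities into equalities. In $l^p$ with $\dim\geq 2^{n-1}$, set $z_j=2^{-(n-1)/p}\sum_{i=1}^{2^{n-1}}a_{ij}^{(n)}e_i$, where $a_{ij}^{(n)}$ are the $\pm 1$ entries of the matrix $A_n$ introduced earlier and $(e_i)$ is the canonical basis; the normalization gives $\|z_j\|_{l^p}=1$. Since the rows of $A_n$ enumerate every sign pattern starting with $+1$, column sign-flips only permute rows, so for every $\vec\theta\in\{\pm 1\}^{n-1}$ the multiset of coordinates of $z_1+\sum_{j\geq 2}\theta_j z_j$ is the same, and a direct count identifies $\|z_1+\sum_{j\geq 2}\theta_j z_j\|_{l^p}^p$ as $2^{1-n}\sum_{k=0}^{\lfloor n/2\rfloor}\binom{n}{k}(n-2k)^p$, independently of $\vec\theta$. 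Constancy of the $2^{n-1}$ norm-$p$-th-powers is precisely the equality case of Jensen, and the common value matches the Rademacher $L^p$-norm, forcing Lemma~\ref{ineqlp} to be tight as well; hence $C^{(n)}(z_1,\ldots,z_n)$ equals the upper bound. For $X=L^p(\mu)$ with $\dim X\geq 2^{n-1}$ either $\mu$ has at least $2^{n-1}$ atoms and the same construction transfers through an isometric copy of $l_{2^{n-1}}^p$ in $X$, or the non-atomic part of $\mu$ supports Rademacher functions $r_1,\ldots,r_n\in S(L^p(\mu))$, whose sign-invariance $\|\sum c_j r_j\|_{L^p}=\|\sum r_j\|_{L^p}$ for $|c_j|=1$ again supplies the required constancy.

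The main obstacle is the \emph{simultaneous} tightness of Lemma~\ref{ineqlp} and of Jensen: each is sharp in its own right, but coupling them forces equality of all $2^{n-1}$ norms $\|z_1+\sum\theta_j z_j\|$ across every choice of signs, which is exactly what the dimension hypothesis $\dim X\geq 2^{n-1}$ and the algebraic structure of the Hadamard-like matrix $A_n$ are tailored to deliver.
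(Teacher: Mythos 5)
Your proposal is correct and follows essentially the same route as the paper: the $1\leq p\leq 2$ case via Proposition~\ref{propCn}(a), Theorem~3 of \cite{KT3} and normalized disjointly supported unit vectors, and the $2<p<\infty$ case via Lemma~\ref{ineqlp} combined with the power-mean (H\"older) inequality for the upper bound and the normalized columns of $A_{n}$ for the matching lower bound. The only cosmetic differences are your ``equality case of Jensen'' framing (the paper simply computes $C^{(n)}(z_{1},\ldots,z_{n})$ directly, which your direct count also accomplishes) and your atomic/non-atomic split with Rademacher functions for $L^{p}(\mu)$, where the paper just embeds $l_{2^{n-1}}^{p}$ isometrically.
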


\begin{proof}
Case 1. Let $1\leq p\leq 2.$ By Theorem 3 from \cite{KT3} and
Proposition \ref{propCn} $(a),$ for all $n\geq 2,$ we have 
\begin{equation*}
\overline{C}_{mNJ}^{(n)}(X)\leq \overline{C}_{NJ}^{(n)}(X)=n^{\frac{2}{p}%
-1}.
\end{equation*}%
whenever $X=L^{p}(\mu )$ or $X=l^{p}.$ The opposite inequality follows
immediately from Proposition \ref{CninL} $(a)$ whenever $X=l^{p}$ with $\dim
l^{p}\geq n.$

Now consider $X=L^{p}(\mu ).$ Let $A\subset \Omega $ be a set of positive
finite measure. Divide the set $A$ into $n$ pairwise disjoint subsets $%
A_{1}, $ $A_{2},...,A_{n}$ such that $\bigcup\limits_{i=1}^{n}A_{i}=A$ and $%
\mu (A_{i})=\frac{1}{n}\mu (A).$ Define $z_{i}=\mu (A_{i})^{-1/p}\chi
_{A_{i}}$ for $i=1,2,...,n.$ Then 
\begin{equation*}
\left\Vert z_{i}\right\Vert _{L^{p}}=\left( \int\limits_{A_{i}}\left( \mu
(A_{i})^{-1/p}\right) ^{p}d\mu \right) ^{\frac{1}{p}}=1
\end{equation*}%
for any $i\in \{1,2,...,n\}$ and 
\begin{eqnarray*}
\overline{C}_{mNJ}^{(n)}(L^{p}) &\geq &C^{(n)}(z_{1},z_{2},...,z_{n}) \\
&=&\frac{1}{n2^{n-1}}\sum_{\theta _{j}=\pm 1}\left\Vert
z_{1}+\sum_{j=2}^{n}\theta _{j}z_{j}\right\Vert _{L^{p}}^{2} \\
&=&\frac{1}{n2^{n-1}}\sum_{\theta _{j}=\pm 1}\left\Vert \mu
(A_{1})^{-1/p}\chi _{A_{1}}+\sum_{j=2}^{n}\theta _{j}\mu (A_{j})^{-1/p}\chi
_{A_{j}}\right\Vert _{L^{p}}^{2} \\
&=&\frac{1}{n2^{n-1}}2^{n-1}\left\Vert \left( \frac{1}{n}\mu (A)\right)
^{-1/p}\chi _{A}\right\Vert _{L^{p}}^{2} \\
&=&\frac{1}{n}\left[ \mu (A)\left( \frac{1}{n}\mu (A)\right) ^{-1}\right] ^{%
\frac{2}{p}}=n^{\frac{2}{p}-1}.
\end{eqnarray*}%
Hence $\overline{C}_{mNJ}^{(n)}(L^{p})=\overline{C}_{NJ}^{(n)}(L^{p})=n^{%
\frac{2}{p}-1},$ whenever $1\leq p\leq 2.$ \medskip\ 

Case 2. Let $2<p<\infty $ and $X=L^{p}(\mu )$ or $X=l^{p}.$ By the H\"older-Rogers inequality for $p>2$ and by Lemma \ref{ineqlp}, we have 
\begin{eqnarray}
\sum_{\theta _{j}=\pm 1}\left\Vert x_{1}+\sum_{j=2}^{n}\theta
_{j}x_{j}\right\Vert _{X}^{2} &\leq &2^{2(n-1)\left( \frac{1}{2}-\frac{1}{p}%
\right) }\left( \sum_{\theta _{j}=\pm 1}\left\Vert
x_{1}+\sum_{j=2}^{n}\theta _{j}x_{j}\right\Vert _{X}^{p}\right) ^{\frac{2}{p}%
}  \notag \\
&\leq &2^{(n-1)\left( \frac{p-2}{p}\right) }\left(
n^{-1}\sum\limits_{k=0}^{\left\lfloor n/2\right\rfloor }\binom{n}{k}\left(
n-2k\right) ^{p}\sum_{j=1}^{n}\left\Vert x_{j}\right\Vert _{X}^{p}\right) ^{%
\frac{2}{p}}  \label{Clr}
\end{eqnarray}%
for any $x_{1},x_{2},...,x_{n}\in X.$ Assume that $x_{1},x_{2},...,x_{n}\in
S(X).$ Then, by inequality (\ref{Clr}), we have 
\begin{eqnarray*}
C^{(n)}(x_{1},x_{2},...,x_{n}) &=&\frac{\sum_{\theta _{j}=\pm 1}\left\Vert
x_{1}+\sum_{j=2}^{n}\theta _{j}x_{j}\right\Vert _{X}^{2}}{n2^{n-1}} \\
&\leq &\frac{1}{n2^{n-1}}2^{(n-1)\left( \frac{p-2}{p}\right) }\left(
n^{-1}\sum\limits_{k=0}^{\left\lfloor n/2\right\rfloor }\binom{n}{k}\left(
n-2k\right) ^{p}\sum_{j=1}^{n}\left\Vert x_{j}\right\Vert _{X}^{p}\right) ^{%
\frac{2}{p}} \\
&=&\frac{1}{n}2^{(n-1)\left( \frac{p-2}{p}-1\right) }\left(
\sum\limits_{k=0}^{\left\lfloor n/2\right\rfloor }\binom{n}{k}\left(
n-2k\right) ^{p}\right) ^{\frac{2}{p}} \\
&=&n^{-1}\left( 2^{1-n}\sum\limits_{k=0}^{[n/2]}\binom{n}{k}\left(
n-2k\right) ^{p}\right) ^{\frac{2}{p}},
\end{eqnarray*}%
whence 
\begin{equation}
\overline{C}_{mNJ}^{(n)}(X)\leq n^{-1}\left(
2^{1-n}\sum\limits_{k=0}^{\left\lfloor n/2\right\rfloor }\binom{n}{k}\left(
n-2k\right) ^{p}\right) ^{\frac{2}{p}}.  \label{CmNJup}
\end{equation}%
Let the matrix $A_{n}$ be defined as in the proof of Proposition \ref{D} $%
(b).$ Denote by $y_{i}$ column $i$ of the matrix $A_{n}$ $(i=1,2,...,n).$
For any $i\in \left\{ 1,2,...,n\right\} $ define 
\begin{equation*}
z_{i}=\frac{1}{\left( 2^{n-1}\right) ^{1/p}}y_{i}^{T},
\end{equation*}%
where $y_{i}^{T}$ denotes the transpose of the column $y_{i}.$ Then 
\begin{equation*}
\left\Vert z_{i}\right\Vert _{l_{2^{n-1}}^{p}}=\left(
\sum\nolimits_{m=1}^{2^{n-1}}\left( \frac{1}{\left( 2^{n-1}\right) ^{1/p}}%
\right) ^{p}\right) ^{1/p}=1
\end{equation*}%
for any $i\in \left\{ 1,2,...,n\right\} .$ Hence $z_{1},z_{2},...,z_{n}\in
S\left( l_{2^{n-1}}^{p}\right) .$ For any element $%
x=(t_{1},t_{2},...,t_{2^{n-1}})\in l_{2^{n-1}}^{p}$ denote by $x^{\ast }$
its non-increasing rearrangement, i.e. a non-increasing sequence obtained
from $\left\{ \left\vert t_{i}\right\vert \right\} _{i=1}^{2^{n-1}}$ by a
suitable permutation of the integers. Notice that for all sequences $%
(1,\theta _{2},...,\theta _{n})$ such that $\theta _{j}=\pm 1,$ $%
(j=2,3,...,n)$ the non-increasing rearrangements $\left(
z_{1}+\sum_{j=2}^{n}\theta _{j}z_{j}\right) ^{\ast }$ coincide. Denoting by $%
\left( v_{1},v_{2},...,v_{2^{n-1}}\right) $ the non-increasing sequence such
that 
\begin{equation*}
\left( z_{1}+\sum\nolimits_{j=2}^{n}\theta _{j}z_{j}\right) ^{\ast }=\left(
v_{1},v_{2},...,v_{2^{n-1}}\right)
\end{equation*}%
for any sequences $(1,\theta _{2},...,\theta _{n}).$ Hence 
\begin{eqnarray*}
C^{(n)}(x_{1},x_{2},...,x_{n}) &=&\frac{\sum_{\theta _{j}=\pm 1}\left\Vert
z_{1}+\sum_{j=2}^{n}\theta _{j}z_{j}\right\Vert _{l_{2^{n-1}}^{p}}^{2}}{%
n2^{n-1}} \\
&=&\frac{1}{n}\left\Vert \left( v_{1},v_{2},...,v_{2^{n-1}}\right)
\right\Vert _{l_{2^{n-1}}^{p}}^{2}.
\end{eqnarray*}%
Notice that $v_{1}=\frac{n}{\left( 2^{n-1}\right) ^{1/p}},$ $v_{l}=\frac{n-2k%
}{\left( 2^{n-1}\right) ^{1/p}}$ for $\binom{n}{k}$ subsequent integers $l,$ 
$\left( k=1,2,...,\left\lfloor n/2\right\rfloor \right) .$ Consequently, 
\begin{eqnarray*}
\overline{C}_{mNJ}^{(n)}\left( l_{2^{n-1}}^{p}\right) &\geq
&C^{(n)}(x_{1},x_{2},...,x_{n}) \\
&=&\frac{1}{n}\left( \sum\limits_{k=0}^{\left\lfloor n/2\right\rfloor }%
\binom{n}{k}\left( \frac{n-2k}{\left( 2^{n-1}\right) ^{1/p}}\right)
^{p}\right) ^{2/p} \\
&=&n^{-1}\left( 2^{1-n}\sum\limits_{k=0}^{\left\lfloor n/2\right\rfloor }%
\binom{n}{k}\left( n-2k\right) ^{p}\right) ^{\frac{2}{p}}.
\end{eqnarray*}%
Since $l_{2^{n-1}}^{p}$ can be embedded isometrically in any $l^{p}$ with $%
\dim l^{p}\geq 2^{n-1},$ by inequality (\ref{CmNJup}) applied for $X=l^{p}$,
it follows that 
\begin{equation*}
\overline{C}_{mNJ}^{(n)}\left( l_{2^{n-1}}^{p}\right) =\overline{C}%
_{mNJ}^{(n)}\left( l^{p}\right) =n^{-1}\left(
2^{1-n}\sum\limits_{k=0}^{\left\lfloor n/2\right\rfloor }\binom{n}{k}\left(
n-2k\right) ^{p}\right) ^{\frac{2}{p}}
\end{equation*}%
whenever $2<p\leq \infty $ and $\dim l^{p}\geq 2^{n-1}.$

Since $L^{p}(\mu )$ contains an isometric copy of $l_{2^{n-1}}^{p},$ by
inequality (\ref{CmNJup}), we obtain the thesis for $X=L^{p}(\mu ),$ which
completes the proof.
\end{proof}

Haagerup \cite{Haa} proved that the best type $(2,p)$ constant in the
Khinthine inequality for $2\leq p<\infty $ is $B_{p}=\sqrt{2}\left( \frac{%
\Gamma \left( \frac{p+1}{2}\right) }{\sqrt{\pi }}\right) ^{\frac{1}{p}}.$
Kato, Takahashi and Hashimoto proved in \cite{KT3} that $\overline{C}%
_{NJ}^{(n)}\left( X\right) \leq \min \left\{ n^{\frac{2}{q}%
-1},B_{p}^{2}\right\} .$ Combining this result with Theorem \ref{mCJ}, we
get two hand side estimation of upper von Neuman-Jordan constant for
Lebesgue spaces with $p\in (2,\infty ).$

\begin{corollary}
\label{CJLest}Let $2<p<\infty ,$ $q$ be conjugate to $p$ and $X=L^{p}(\mu )$
or $X=l^{p}.$ If $\dim X\geq 2^{n-1},$ then 
\begin{equation*}
n^{-1}\left( 2^{1-n}\sum\limits_{k=0}^{\left\lfloor n/2\right\rfloor }\binom{%
n}{k}\left( n-2k\right) ^{p}\right) ^{\frac{2}{p}}\leq \overline{C}%
_{NJ}^{(n)}\left( X\right) \leq \min \left\{ n^{\frac{2}{q}%
-1},B_{p}^{2}\right\} .
\end{equation*}
\end{corollary}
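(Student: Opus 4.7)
The plan is to observe that this corollary is essentially an assembly result: both bounds are already in hand from earlier work in the paper or from the cited literature, so the task reduces to recording the correct chain of inequalities.

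For the upper bound, I would simply invoke the estimate $\overline{C}_{NJ}^{(n)}(X) \leq \min\{n^{2/q-1}, B_p^2\}$ proved by Kato, Takahashi and Hashimoto in \cite{KT3}, which is explicitly quoted in the paragraph preceding the statement. The two quantities inside the min come from two different arguments in \cite{KT3}: the $n^{2/q-1}$ bound is the natural $L^p$ estimate via duality/Clarkson, while the $B_p^2$ bound invokes the Haagerup best-constant Khintchine inequality. Neither needs to be reproved here.

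For the lower bound, the key observation is Proposition \ref{propCn}(a), which gives the hierarchy $\overline{C}_{mNJ}^{(n)}(X) \leq \overline{C}_{NJ}^{(n)}(X)$ for every Banach space $X$. Combining this with the explicit evaluation
\begin{equation*}
\overline{C}_{mNJ}^{(n)}(X) = n^{-1}\left( 2^{1-n}\sum_{k=0}^{\lfloor n/2 \rfloor} \binom{n}{k}(n-2k)^{p} \right)^{2/p}
\end{equation*}
established in Theorem \ref{mCJ} (Case 2), which is valid under exactly the same hypothesis $\dim X \geq 2^{n-1}$, $2 < p < \infty$, immediately yields the claimed lower estimate for $\overline{C}_{NJ}^{(n)}(X)$.

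So my proof would consist of two short sentences: one citing \cite{KT3} for the upper bound, and one chaining Theorem \ref{mCJ} with Proposition \ref{propCn}(a) for the lower bound. There is no genuine obstacle; the only thing to verify is that the hypotheses line up, which they do since Theorem \ref{mCJ} requires $\dim X \geq 2^{n-1}$ in the regime $p > 2$, matching the hypothesis of the corollary. The whole point of the corollary is to display the sandwich explicitly so that the reader sees how close (or not) the modified constant $\overline{C}_{mNJ}^{(n)}$ is to the unmodified $\overline{C}_{NJ}^{(n)}$ in the $L^p$ setting.
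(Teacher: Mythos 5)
Your proposal is correct and follows the paper's own proof exactly: the left-hand inequality is obtained by combining the evaluation of $\overline{C}_{mNJ}^{(n)}(X)$ in Theorem \ref{mCJ} with the inequality $\overline{C}_{mNJ}^{(n)}(X)\leq \overline{C}_{NJ}^{(n)}(X)$ from Proposition \ref{propCn}~$(a)$, and the right-hand inequality is cited from \cite{KT3}. The hypotheses indeed match, so nothing further is needed.
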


\begin{proof}
The left hand side inequality follows immediately from Theorem \ref%
{mCJ}. Namely, 
\begin{equation*}
n^{-1}\left( 2^{1-n}\sum\limits_{k=0}^{\left\lfloor n/2\right\rfloor }\binom{%
n}{k}\left( n-2k\right) ^{p}\right) ^{\frac{2}{p}}=\overline{C}%
_{mNJ}^{(n)}\left( X\right) \leq \overline{C}_{NJ}^{(n)}\left( X\right) ,
\end{equation*}%
whenever $2<p<\infty ,$ $X=L^{p}(\mu )$ or $X=l^{p}$ and $\dim X\geq
2^{n-1}. $ The right hand side inequality was proved in \cite{KT3}.
\end{proof}

\begin{corollary}
\label{mCJL}Let $2\leq p\leq \infty $ and $X=L^{p}(\mu )$ or $X=l^{p}.$ If $%
\dim X\geq $ $n,$ then $\underline{C}_{NJ}^{(n)}(X)=n^{\frac{2}{p}%
-1}.$ \medskip\ 
\end{corollary}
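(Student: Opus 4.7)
The plan is to sandwich $\underline{C}_{NJ}^{(n)}(X)$ between $n^{2/p-1}$ on both sides. The endpoint $p=\infty$ is already contained in Proposition~\ref{CninL}(c)--(d), which give $\underline{C}_{NJ}^{(n)}(l^{\infty})=\underline{C}_{NJ}^{(n)}(L^{\infty}(\mu))=1/n$, matching $n^{2/p-1}$ under the convention $2/\infty=0$. So I would immediately reduce to the range $2\le p<\infty$.

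For the lower bound I would invoke duality. The dual $X^{*}$ is $L^{q}(\mu)$ (respectively $l^{q}$) with conjugate exponent $q\in(1,2]$, so Proposition~\ref{CninL}(a) (which is essentially Theorem~3 of \cite{KT3}) applies to $X^{*}$ and gives $\overline{C}_{NJ}^{(n)}(X^{*})=n^{2/q-1}$. Then Corollary~\ref{Dual}(b) yields
\[
\underline{C}_{NJ}^{(n)}(X)\;\ge\;\frac{1}{\overline{C}_{NJ}^{(n)}(X^{*})}\;=\;n^{1-2/q}\;=\;n^{2/p-1}.
\]

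For the matching upper bound on the infimum I would simply exhibit one configuration realizing this value. When $X=l^{p}$, the first $n$ canonical basis vectors $e_{1},\ldots,e_{n}$ (available because $\dim l^{p}\ge n$) have pairwise disjoint supports, so $\|e_{1}+\sum_{j=2}^{n}\theta_{j}e_{j}\|_{l^{p}}=n^{1/p}$ independently of the sign pattern, and a one-line computation gives $C^{(n)}(e_{1},\ldots,e_{n})=2^{n-1}n^{2/p}/(2^{n-1}n)=n^{2/p-1}$. When $X=L^{p}(\mu)$, I would replace these by the normalized indicators $z_{i}=\mu(A_{i})^{-1/p}\chi_{A_{i}}$ of $n$ pairwise disjoint sets $A_{i}$ of equal positive finite measure (whose existence follows from $\dim L^{p}(\mu)\ge n$), and the computation proceeds exactly as in Case~1 of Theorem~\ref{mCJ}, giving $C^{(n)}(z_{1},\ldots,z_{n})=n^{2/p-1}$. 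Passing to the infimum over all $n$-tuples completes the argument.

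No step should be genuinely hard. The only point that needs watching is that Corollary~\ref{Dual}(b) genuinely requires $X^{*}$ to be among the spaces whose upper $n$-th von Neumann--Jordan constant has already been computed, which is why the case $p=\infty$ (where $(l^{\infty})^{*}\neq l^{1}$) is handled separately by directly quoting Proposition~\ref{CninL}(c)--(d); apart from that, the duality bound and the explicit example close the two inequalities cleanly.
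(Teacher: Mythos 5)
Your proposal is correct and follows essentially the same route as the paper: the lower bound via Corollary \ref{Dual}(b) together with $\overline{C}_{NJ}^{(n)}(X^{\ast})=n^{2/q-1}$ for the conjugate exponent $q\in[1,2]$, and the upper bound by evaluating $C^{(n)}$ on the canonical basis of $l_{n}^{p}$ (equivalently, on normalized indicators of disjoint sets in $L^{p}(\mu)$). Your separate treatment of $p=\infty$ via Proposition \ref{CninL}(c)--(d) is in fact slightly more careful than the paper's writeup, which applies the duality identification $(L^{p})^{\ast}=L^{q}$ uniformly over $2\le p\le\infty$ even though it fails at the endpoint.
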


\begin{proof} 
Let $2\leq p\leq \infty $ and $q$\ denote the conjugate
number of $p.$ By Corollary \ref{Dual} and Theorem \ref{mCJ}, we have 
\begin{equation*}
\underline{C}_{NJ}^{(n)}(L^{p}(\mu ))\geq \frac{1}{\overline{C}%
_{NJ}^{(n)}(\left( L^{p}(\mu )\right) ^{\ast })}=\frac{1}{\overline{C}%
_{NJ}^{(n)}(L^{q}(\mu ))}=n^{1-\frac{2}{q}}=n^{\frac{2}{p}-1}.
\end{equation*}%
On the other hand, taking the canonical basis $\left\{ e_{i}\right\}
_{i=1}^{n}$ in $l_{n}^{p}$ we have 
\begin{eqnarray*}
C^{(n)}\left( e_{1},e_{2},...,e_{n}\right) &=&\frac{\sum_{\theta _{j}=\pm
1}\left\Vert e_{1}+\sum_{j=2}^{n}\theta _{j}e_{j}\right\Vert _{l_{n}^{p}}^{2}%
}{2^{n-1}\sum_{j=1}^{n}\left\Vert e_{j}\right\Vert _{l_{n}^{p}}^{2}} \\
&=&\frac{2^{n-1}n^{\frac{2}{p}}}{2^{n-1}n}=n^{\frac{2}{p}-1}.
\end{eqnarray*}%
Hence, by the definition of lower $n$-th von Neumann-Jordan constant, we get 
\begin{equation*}
\underline{C}_{NJ}^{(n)}(L^{p}(\mu ))\leq \underline{C}%
_{NJ}^{(n)}(l_{n}^{p})\leq n^{\frac{2}{p}-1}
\end{equation*}%
whenever $\dim L^{p}(\mu )\geq n.$ Combining the both inequalities, we get
the thesis.
\end{proof}

\begin{remark}
It is known that $C_{NJ}(X)=C_{NJ}(X^{\ast })$ (see \cite{KT2}) and in
general $\overline{C}_{NJ}^{(n)}(X)\neq \overline{C}_{NJ}^{(n)}(X^{\ast })$
for $n\geq 3$ (see \cite{KT3}). Theorem \ref{mCJ} shows that $\overline{C}%
_{mNJ}^{(2)}(X)=\overline{C}_{mNJ}^{(2)}(X^{\ast })$ whenever $X=L^{p}(\mu )$
or $X=l^{p}$ with $\dim X\geq 2^{n-1}.$ Really, fix $1<p<2$ and consider $%
X=L^{p}(\mu )$ or $X=l^{p}$ with $\dim X\geq 2^{n-1}.$ Let $q$ be conjugate
to $p.$ Then $q>2$ and $X^{\ast }=L^{q}(\mu )$ or $X^{\ast }=l^{q}$ with $%
\dim X^{\ast }\geq 2^{n-1},$ respectively. Applying Theorem \ref{mCJ} for $%
n=2,$ we have 
\begin{equation*}
\overline{C}_{mNJ}^{(2)}(X)=2^{\frac{2}{p}-1}.
\end{equation*}%
Since $q=\frac{p}{p-1}>2,$ it follows from Theorem \ref{mCJ} that 
\begin{eqnarray*}
\overline{C}_{mNJ}^{(2)}(X^{\ast }) &=&\frac{1}{2}\left( \frac{1}{2}%
\sum\limits_{k=0}^{1}\binom{2}{k}\left( 2-2k\right) ^{\frac{p}{p-1}}\right)
^{\frac{2(p-1)}{p}} \\
&=&\frac{1}{2}\left( 2^{\frac{p}{p-1}-1}\right) ^{\frac{2(p-1)}{p}}=2^{\frac{%
2}{p}-1},
\end{eqnarray*}%
whence $\overline{C}_{mNJ}^{(2)}(X)=\overline{C}_{mNJ}^{(2)}(X^{\ast }).$ 
\newline
\hspace*{0.3in}The equality $\overline{C}_{mNJ}^{(n)}(X)=\overline{C}%
_{mNJ}^{(n)}(X^{\ast })$ does not hold in general for $n\geq 3.$ By Remark $%
9 $\nolinebreak $(ii)$ in \cite{KT3} and Theorem \ref{mCJ}, we have 
\begin{equation*}
\overline{C}_{mNJ}^{(n)}(X^{\ast })\leq \overline{C}_{NJ}^{(n)}(X^{\ast })<%
\overline{C}_{NJ}^{(n)}(X)=n^{\frac{2}{p}-1}=\overline{C}_{mNJ}^{(n)}(X),
\end{equation*}%
whence $\overline{C}_{mNJ}^{(n)}(X)\not=\overline{C}_{mNJ}^{(n)}(X^{\ast })$
for $n\geq 3.$
\end{remark}

\bigskip\ 

{\small Maciej CIESIELSKI, Institute of Mathematics, Pozna\'{n} University
of Tech\-no\-lo\-gy, Piotrowo 3A, 60-965 POZNA\'{N}, Poland}

{\small email: maciej.ciesielski@put.poznan.pl;}

{\small Ryszard P\L UCIENNIK, Institute of Mathematics, Pozna\'{n}
University of Tech\-no\-lo\-gy, Piotrowo 3A, 60-965 POZNA\'{N}, Poland }

{\small email: ryszard.pluciennik@put.poznan.pl;}

\end{document}